\newtheorem{remark}{Remark}[section]
\newtheorem{algorithm}{Algorithm}[section]
\title{A Multilevel Correction Scheme for Nonsymmetric Eigenvalue Problems by Finite Element Methods}
\author{Hehu Xie\thanks{LSEC, ICMSEC, NCMIS, Academy of Mathematics and
Systems Science, Chinese Academy of Sciences, Beijing 100190, China
({\tt hhxie@lsec.cc.ac.cn}).}
\and Zhimin Zhang\thanks{Beijing Computational Science Research Center, and
Department of Mathematics, Wayne State University, Detroit, MI 48202, USA
({\tt zzhang@math.wayne.edu}).}}
\begin{document}
\maketitle
\begin{abstract}
A multilevel correction scheme is proposed to solve {\color{black}defective and nodefective of nonsymmetric partial differential
operators} by the finite element method. The method includes multi correction steps in a sequence of finite
element spaces. In each correction step, we only need to solve two source problems on a finer
finite element space and two eigenvalue problems on the coarsest finite element space.
The accuracy of the eigenpair approximation is improved after each correction step.
This correction scheme improves overall efficiency of the finite element method in
solving nonsymmetric eigenvalue problems.
\end{abstract}
\begin{keywords}
Nonsymmetric eigenvalue problem, multilevel correction, finite element method, high-efficiency
\end{keywords}
\begin{AMS}
65N30, 65N25, 65L15, 65B99
\end{AMS}
\pagestyle{myheadings}
\thispagestyle{plain}
\markboth{Hehu Xie and Zhimin Zhang}{Multilevel correction for nonsymmetric eigenvalue}

\section{Introduction}

As we know, the numerical approximation of eigenvalue problems plays a
central role in the analysis of the stability for nonlinear partial differential
equations. For example in fluid mechanics, the analysis of the
hydrodynamic stability always leads to a nonsymmetric eigenvalue problems (see
\cite{CliffeHallHouston,HeuvelineRannacher_2001,HeuvelineRannacher_2006}).
The stability of the underlying flow depends on the real part of the eigenvalue which has the
smallest real part (see \cite{CliffeHallHouston,HeuvelineRannacher_2006}). For more details, please
refer \cite{CliffeHallHouston,HeuvelineRannacher_2006,SchmidHenningson}.
The aim of understanding the stability of nonlinear partial differential equations naturally leads to
the computation of the eigenvalue problems with some numerical methods. The main content of this paper is
to design an efficient finite element method to compute nonsymmetric eigenvalue problems.

Recently, a multigrid method is designed to solve the self-adjoint eigenvalue problem based
on a type of multilevel correction method \cite{LinLuoXie,LinXie,LinXie_Multigrid,Xie_IMA}.
But as we know, the analysis of the stability
for nonlinear partial differential equations always leads to nonsymmetric eigenvalue problems
\cite{CliffeHallHouston,HeuvelineRannacher_2006}
 and the extensions of the multilevel method for self-adjoint eigenvalue problems to the nonsymmetric
ones is not direct \cite{Kolman,XuZhou,YangFan} and needs more analysis.
So the purpose of this paper is to propose a multilevel
correction scheme to solve nonsymmetric eigenvalue problems based on the finite element method.
In the past, a two-grid finite element method was proposed and analyzed by Xu and Zhou in
\cite{XuZhou} for symmetric eigenvalue problems. Latter, Kolman used
this idea to design a two-level method for nonsymmetric
 eigenvalue problems in \cite{Kolman}. Yang and Fan \cite{YangFan}
 also studied a two-grid method for nonsymmetric eigenvalue problems.
As an alternative approach, in \cite{NagaZhang,NagaZhangZhou,WuZhang}, the authors used a
 recovery technique PPR to improve the convergence rate for both
 symmetric and nonsymmetric eigenvalue problems. {\color{black}All these methods are designed for the
 nonsymmetric eigenvalue problems under the assumption that the ascent of the concerned eigenvalues is only one
 which means the algebraic eigenspace is the same as the geometric eigenspace.}

Along the line of multilevel correction method, here we present a multilevel correction
scheme to solve nonsymmetric eigenvalue problems {\color{black}without the ascent assumption}.
With the proposed method, solving nonsymmetric eigenvalue problems will not be much more expensive
than solving corresponding source problems. The correction method for eigenvalue problems
in this paper is based on a series of finite element spaces with different levels of accuracy
 which are related to the multilevel method (cf.  \cite{Xu}).

The standard Galerkin finite element method for nonsymmetric eigenvalue problems
has been extensively investigated, e.g. Babu\v{s}ka and Osborn
\cite{Babuska2,BabuskaOsborn}, Chatelin \cite{Chatelin} and
references cited therein.  Here we adopt some basic results in these
papers to { carry on error estimates for our multilevel
correction scheme}. It will be shown that the convergence rate of
the eigenpair approximations can be improved after each correction step.

Our multilevel correction procedure can be described as follows: (1)\
solve an eigenvalue problem in the coarsest finite element space;
(2)\ solve a source problem in an augmented space with the associated eigenfunction
 from (1) as the load vector; (3)\ solve the eigenvalue problem
again on a finite element space constructed by enhancing
the coarsest finite element space with the eigenfunction obtained in step (2).
Then go to step (2) for the next loop.

In this method, we replace solving the eigenvalue problem in finer
finite element spaces by solving a series of boundary value problems
in a series of nested finite element spaces (with the finest space as the last one) and a series of
eigenvalue problems in the coarsest finite element space;
and yet, we achieve the same accuracy as solving the eigenvalue problem in the finest space.
It is well known that there exist multigrid methods that solve boundary value problems with the
optimal computational work (cf. \cite{Xu_Two_Grid}).
Therefore, combined with the multigrid method, our correction method improves overall efficiency
in solving nonsymmetric eigenvalue problems (cf. \cite{Xie_IMA,Xie_JCP}).

An outline of the paper goes as follows. In Section 2, we introduce the
finite element method for nonsymmetric eigenvalue problems.
An one level correction scheme is described and analyzed in Section 3.
In Section 4, we propose and analyze a multilevel correction
algorithm to solve nonsymmetric eigenvalue problems by the finite element method.
Some numerical examples are presented in Section 5 to validate our
theoretical analysis and some concluding remarks are given in the last section.

\section{Discretization by finite element method}
In this section, we introduce some notation and error estimates of
the finite element approximation for nonsymmetric eigenvalue problems.
Throughout this paper, the letter $C$ (with or without subscripts) denotes a generic
positive constant which may be different at different occurrences.
For convenience, we use symbols $\lesssim$, $\gtrsim$, and $\approx$,
such that $x_1\lesssim y_1, x_2\gtrsim y_2$,
and $x_3\approx y_3$ have meanings: $x_1\leq C_1y_1$, $x_2 \geq c_2y_2$,
and $c_3x_3\leq y_3\leq C_3x_3$, for some constants $C_1, c_2, c_3$,
and $C_3$ that are independent of mesh sizes (cf. \cite{Xu}).

We consider the following eigenvalue problem:

Find $\lambda\in \mathcal{C}$ and $u$ such that
\begin{equation}\label{Eigenvalue_Problem}
\left\{
\begin{array}{rcl}
-\nabla\cdot(A\nabla u)+\mathbf b\cdot\nabla u +\phi u&=&\lambda\varphi u,\ \ \ {\rm in}\ \Omega,\\
u&=&0,\ \ \ \ \ \ \ {\rm on}\ \partial\Omega,\\
\int_{\Omega}\varphi |u|^2d\Omega&=&1,
\end{array}
\right.
\end{equation}
where $\Omega\subset \mathcal{R}^d$ is a bounded polygonal domain, $A\in\mathcal{C}^{d\times d}$,
$\mathbf b\in \mathcal{C}^d$, $\phi$ is a function defined on $\Omega$ and
$\varphi$ is a real positive function with $\varphi\geq c_0>0$.

We define $V:=H_0^1(\Omega)$ with the usual norm $\|\cdot\|_1$.
The corresponding variational form of (\ref{Eigenvalue_Problem}) can be stated as follows:

Find $(\lambda,u)\in \mathcal{C}\times V$ such that $b(u,u)=1$ and
\begin{eqnarray}\label{Eigenvalue_Problem_Weak}
a(u,v)&=&\lambda b(u,v),\ \ \ \ \ \forall v\in V,
\end{eqnarray}
where
\begin{eqnarray*}
a(u,v)&=&(A\nabla u,\overline{\nabla v})+(\mathbf b\cdot \nabla u, \bar{v})+(\phi u,\bar{v}),\\
b(u,v)&=&(\varphi u,\bar{v})
\end{eqnarray*}
with $(\cdot,\cdot)$ denoting the inner product in the space $L^2(\Omega)$.
The corresponding adjoint eigenvalue problem is:

Find $(\lambda,u^*)\in \mathcal{C}\times V$ such that $b(u^*,u^*)=1$ and
\begin{eqnarray}\label{Eigenvalue_Problem_Weak_Adjoint}
a(v,u^*)&=&\lambda b(v,u^*),\ \ \ \ \ \forall v\in V.
\end{eqnarray}
In the sequel, we also use the norm $\|v\|_b=\sqrt{b(v,v)}$ which is equivalent to the
$L^2(\Omega)$ norm $\|\cdot\|_0$. Here the bilinear form $a(\cdot,\cdot)$ is assumed
to satisfy
\begin{eqnarray}
\|w\|_1 \lesssim \sup_{{ v}\in V}\frac{a(w,v)}{\|v\|_1}\ \
{\rm and}\ \ \|w\|_1\lesssim \sup_{v\in V}\frac{a(v,w)}{\|v\|_1},\ \ \forall w\in V.
\end{eqnarray}
We further assume { that $a(\cdot,\cdot)$ is} $V$-elliptic, i.e.,
\begin{eqnarray}\label{Ellipticity_Of_a}
\|u\|_1^2&\lesssim & {\rm Re}\ a(u,u),\ \ \ \ \forall u\in V.
\end{eqnarray}

\subsection{Operator reformulation}
We introduce the operators $T,\ T_*\in \mathcal{L}(V)$ defined by the equation
\begin{eqnarray}
a(Tu,v)=b(u,v)={a}(u,T_*v),\ \ \ \ \ \forall u, v\in V.
\end{eqnarray}
The eigenvalue problem (\ref{Eigenvalue_Problem_Weak}) can be written as an operator form
 for $\lambda\neq 0$ (denoting $\mu:=\lambda^{-1}$):
\begin{eqnarray}
Tu=\mu u,
\end{eqnarray}
{ with}
\begin{eqnarray}
T_*u^*=\bar{\mu}u^*
\end{eqnarray}
for the adjoint eigenvalue problem (\ref{Eigenvalue_Problem_Weak_Adjoint}).
 Note that ellipticity condition (\ref{Ellipticity_Of_a})
guarantees that every eigenvalue $\lambda$ is nonzero.
It is well known that the operators $T$ and $T_*$ are compact. Thus the spectral theory for compact
operators  gives us a complete characterization of the eigenvalue problem (\ref{Eigenvalue_Problem_Weak}).

There is a countable set of eigenvalues of (\ref{Eigenvalue_Problem_Weak}). Let $\lambda$ be { an
eigenvalue} of problem (\ref{Eigenvalue_Problem_Weak}). There exists a smallest integer $\alpha$
{\color{black}which are called the ascent} such that
\begin{eqnarray}
N((T-\mu)^{\alpha})=N((T-\mu)^{\alpha+1}),
\end{eqnarray}
where $N$ denotes the null space and we use the notation $\mu=\lambda^{-1}$.
Let $M(\lambda)=M_{\lambda,\mu}=N((T-\mu)^{\alpha})$
and $Q(\lambda)=Q_{\lambda,\mu}=N(T-\mu)$ denote the algebraic and geometric eigenspaces, respectively.
The subspaces $Q(\lambda)\subset M(\lambda)$ are finite dimensional. The numbers
$m={\rm dim}M(\lambda)$ and $q={\rm dim}Q(\lambda)$ are called the algebraic and the geometric multiplicities
of $\mu$ (and $\lambda$). The vectors in $M(\lambda)$ are generalized eigenvectors. The order of a generalized
eigenvector is the smallest integer $j$ such that $(T-\mu)^ju=0$ (vectors in $Q(\lambda)$ being
generalized eigenvectors of order $1$). Let us point out that a generalized eigenvector $u^j$ of order $j$ satisfies
\begin{eqnarray}\label{Iteration_Scheme}
a(u^j,v)&=&\lambda b(u^j,v)+\lambda a(u^{j-1},v),\ \ \ \ \forall v\in V,
\end{eqnarray}
where $u^{j-1}$ is a generalized eigenvector of order $j-1$.

Similarly we define the spaces of (generalized) eigenvectors for the adjoint problem
\begin{eqnarray*}
M^*(\lambda)=M_{\lambda,\mu}^*=N((T_*-\bar{\mu})^{\alpha})\ \ \ {\rm and}\ \ \
Q^*(\lambda)=Q^*_{\lambda,\mu}=N(T_*-\bar{\mu}).
\end{eqnarray*}

Note that $\mu$ is an eigenvalue of $T$ ($\lambda$ is an eigenvalue of problem
(\ref{Eigenvalue_Problem_Weak})) if and only if
$\bar{\mu}$ is an eigenvalue of $T_*$ ($\lambda$ is an eigenvalue of adjoint problem
(\ref{Eigenvalue_Problem_Weak_Adjoint})) with the ascent $\alpha$
and the algebraic multiplicity $m$ for both eigenvalues being the same.

\subsection{Galerkin discretization}
Now, let us define the finite element approximations for the problem
(\ref{Eigenvalue_Problem_Weak}). First we generate a shape-regular
decomposition of the computing domain $\Omega\subset \mathcal{R}^d\
(d=2,3)$ into triangles or rectangles for $d=2$ (tetrahedrons or
hexahedrons for $d=3$). The diameter of a cell $K\in\mathcal{T}_h$
is denoted by $h_K$. The mesh diameter $h$ describes the maximum
diameter of all cells $K\in\mathcal{T}_h$. Based on the mesh
$\mathcal{T}_h$, { we construct} a finite element space denoted by
$V_h\subset V$. In order to { define our} multilevel correction method, we
start the process on { an initial mesh $\mathcal{T}_H$ with mesh
size $H$ and the initial finite element space $V_H$
defined on} $\mathcal{T}_H$.
In this paper, the finite element space $V_h$ is assumed to satisfy
\begin{eqnarray}\label{Inf_Sup_Discrete}
\|w_h\|_1 \lesssim \sup_{v_h\in V_h}\frac{a(w_h,v_h)}{\|v_h\|_1}\ \ {\rm and}
\ \ \|w_h\|_1\lesssim \sup_{v_h\in V_h}\frac{a(v_h,w_h)}{\|v_h\|_1},\ \ \forall w_h\in V_h.
\end{eqnarray}

The standard Galerkin discretization of the problem (\ref{Eigenvalue_Problem_Weak}) is the following:

Find $(\lambda_h,u_h)\in \mathcal{C}\times V_h$ such that $b(u_h,u_h)=1$ and
\begin{eqnarray}\label{Eigenvalue_Problem_Weak_Discrete}
a(u_h,v_h)&=&\lambda_h b(u_h,v_h),\ \ \ \ \ \forall v_h\in V_h.
\end{eqnarray}
Similarly, the discretization of the adjoint problem (\ref{Eigenvalue_Problem_Weak_Adjoint})
can be defined as:

Find $(\lambda_h,u_h^*)\in \mathcal{C}\times V_h$ such that $b(u_h^*,u_h^*)=1$ and
\begin{eqnarray}\label{Eigenvalue_Problem_Weak_Discrete_Adjoint}
a(v_h,{u_h^*})&=&\lambda_h b(v_h,{u_h^*}),\ \ \ \ \ \forall v_h\in V_h.
\end{eqnarray}

By introducing Galerkin projections $P_h,\ P_h^*\in \mathcal{L}(V,V_h)$
with the following equations
\begin{eqnarray*}
a(P_hu,v_h)&=&a(u,v_h),\ \ \ \ \  \quad \forall u\in V,\ \forall v_h\in V_h,\\
a(v_h,u)&=&a(v_h,P_h^*u),\ \ \ \ \forall u\in V,\ \forall v_h\in V_h,
\end{eqnarray*}
the equation (\ref{Eigenvalue_Problem_Weak_Discrete}) can be rewritten as an operator
 form with $\mu_h:=\lambda_h^{-1}$
(Note that $P_h$ is a bounded operator),
\begin{eqnarray}
P_hTu_h=\mu_hu_h.
\end{eqnarray}
Similarly for the adjoint problem (\ref{Eigenvalue_Problem_Weak_Discrete_Adjoint}), we have
\begin{eqnarray}
P_h^*T_*u_h^*=\bar{\mu}_hu_h^*.
\end{eqnarray}

\subsection{Spectral approximation of compact operators}
Let $\mu$ be an eigenvalue (with algebraic multiplicity $m$) of the compact operator $T$.
If $T$ is approximated by a sequence of compact operators $T_h$ converging to $T$ in norm, i.e.,
$\lim\limits_{h\rightarrow 0+}{ \|T-T_h\|_1}=0$, then for $h$ sufficiently small $\mu$ is approximated by
 $m$ numerical eigenvalues $\{\mu_{j,h}\}_{j=1,\cdots,m}$ (counted according to their algebraic multiplicities)
of $T_h$, i.e.,
\begin{eqnarray*}
\lim_{h\rightarrow 0+}\mu_{j,h}=\mu\ \ \ \ \ {\rm for}\ j=1,\cdots,m.
\end{eqnarray*}
The space of generalized eigenvectors of $T$ is approximated by the subspace
\begin{eqnarray}
M_h(\lambda)=M_h^{\lambda,\mu}=\sum_{j=1}^mN((T_h-\mu_{j,h})^{\alpha_{\mu_{j,h}}}),
\end{eqnarray}
where $\alpha_{\mu_{j,h}}$ is the smallest integer such that $N((T_h-\mu_{j,h})^{\alpha_{\mu_{j,h}}})=N((T_h-\mu_{j,h})^{\alpha_{\mu_{j,h}}+1})$.
We similarly define the space $Q_h(\lambda)=Q_h^{\lambda,\mu}=\sum_{j=1}^mN(T_h-\mu_{j,h})$ and
counterparts $M_h^*(\lambda)$, $Q_h^*(\lambda)$ for
the adjoint problem .

Now, we describe a computational scheme to produce the
algebraic eigenspace { $M_h(\lambda)$} from the geometric eigenspace
$Q_h(\lambda)=\{u_{1,h},\cdots,u_{q,h}\}$
corresponding to eigenvalues $\{\lambda_{1,h},\cdots,\lambda_{q,h}\}$, which
 converge to the same eigenvalue $\lambda$.

Starting from all eigenfunctions in the geometric eigenspace $Q_h(\lambda)$ (of order $1$),
we use the following recursive process to compute
 algebraic eigenspaces (cf. \cite{Shaidurov})
\begin{equation}
\left\{
\begin{array}{rcl}
a(u_{j,h}^{\ell},v_h)-\lambda_{j,h}b(u_{j,h}^{\ell},v_h)&=&\lambda_{j,h}a(u_{j,h}^{\ell-1},v_h),
\ \ \forall v_h\in V_h,\\
b(u_{j,h}^{\ell}, v_h)&=&0,\ \ \ \quad\quad\quad\quad\quad\ \ \forall v_h\in Q_h({\lambda}),
\end{array}
\right.
\end{equation}
where $\ell\geq 2$, $u_{j,h}^{\ell}$ is the general eigenfunction of order
$\ell$ and $u_{j,h}^1=u_{j,h}\in Q_h({\lambda})$ for $j=1,\cdots,q$.

With the above process, we { generate the algebraic eigenspace}
$$M_h(\lambda)=\{u_{1,h},\cdots, u_{q,h},\cdots, u_{m,h}\}$$
corresponding { to} eigenvalues $\{\lambda_{1,h},\cdots,\lambda_{q,h},\cdots, \lambda_{m,h}\}$,
 which converge to the same eigenvalue $\lambda$.  Similarly, we can produce the
adjoint algebraic eigenspace $M_h^*(\lambda)$ { from} the geometric eigenspace $Q_h^*(\lambda)$.

For two linear spaces $A$ and $B$, we denote
\begin{eqnarray*}
\widehat{\Theta}(A,B) = \sup_{w\in A,\|w\|_1=1}\inf_{v\in B}\|w-v\|_1,\ \
\widehat{\Phi}(A,B) = \sup_{w\in A,\|w\|_b=1}\inf_{v\in B}\|w-v\|_{b},
\end{eqnarray*}
{ and} define gaps between $A$ and $B$ in $\|\cdot\|_1$ as
\begin{eqnarray}
\Theta(A,B)=\max\big\{\widehat{\Theta}(A,B), \widehat{\Theta}(B,A)\big\},
\end{eqnarray}
and in $\|\cdot\|_b$ as
\begin{eqnarray}
\Phi(A,B)=\max\big\{\widehat{\Phi}(A,B), \widehat{\Phi}(B,A)\big\}.
\end{eqnarray}

Before introducing the convergence results of the finite element approximation for
nonsymmetric eigenvalue problems, we define the following notation
\begin{eqnarray}
&&\delta_h(\lambda)=\sup_{{u}\in M(\lambda),\|u\|_1=1}\inf_{v_h\in V_h}\|u-v_h\|_1,\\
&&\delta_h^*(\lambda)=\sup_{u^*\in M^*(\lambda),\|u^*\|_1=1}\inf_{v_h\in V_h}\|u^*-v_h\|_1,\\
&&\rho_h(\lambda)=\sup_{{u}\in M(\lambda),\|u\|_b=1}\inf_{v_h\in V_h}\|u-v_h\|_b,\\
&&\rho_h^*(\lambda)=\sup_{u^*\in M^*(\lambda),\|u^*\|_b=1}\inf_{v_h\in V_h}\|u^*-v_h\|_b,\\
&&\eta_a(h)=\sup_{f\in V,\|f\|_b=1}\inf_{v\in V_h}\|T f-v\|_1,\\
&&\eta_a^*(h)=\sup_{f\in V,\|f\|_b=1}\inf_{v\in V_h}\|T_* f-v\|_1.
\end{eqnarray}
In order to derive { error bounds for} eigenpair approximations in the
weak norm $\|\cdot\|_b$, we need the following error estimates in the weak norm $\|\cdot\|_b$
of the finite element approximation.
\begin{lemma}\label{Negative_norm_estimate_Lemma}
(\cite[Lemma 3.3 and Lemma 3.4]{BabuskaOsborn})
\begin{eqnarray}
\eta_a(h)=o(1),\ \ \ \eta_a^*(h)=o(1)\ \ \ {\rm as}\ h\rightarrow 0,
\end{eqnarray}
and
\begin{eqnarray}\label{Negative_norm_Error}
\rho_h(\lambda)&\lesssim& \eta_a^*(h)\delta_h(\lambda),\\
\rho_h^*(\lambda)&\lesssim& \eta_a(h)\delta_h^*(\lambda).
\end{eqnarray}
\end{lemma}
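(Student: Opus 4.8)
The plan is to establish the lemma in two independent parts: first the qualitative convergence $\eta_a(h),\eta_a^*(h)=o(1)$, which rests on the compactness of $T$ and $T_*$ together with the density of the finite element spaces in $V$; and second the two duality (Aubin--Nitsche) estimates, which convert the energy-norm approximation error of the generalized eigenspace into the weaker $\|\cdot\|_b$-norm error at the cost of an adjoint approximation factor.

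For the first part I would argue that the set $\mathcal{K}=\{Tf:\|f\|_b=1\}$ is precompact in $(V,\|\cdot\|_1)$. Indeed, testing $a(Tf,v)=b(f,v)$ with $v=Tf$ and using the $V$-ellipticity (\ref{Ellipticity_Of_a}) gives $\|Tf\|_1\lesssim\|f\|_b$, so $T$ extends to a bounded map from $L^2(\Omega)$ into $V$; the smoothing property of the source problem (a regularity shift into $H^{1+s}$, $s>0$) combined with the compact embedding $H^{1+s}\hookrightarrow H^1$ shows this map is compact, hence $\mathcal{K}$ is precompact. Since $V_h\subset V$ satisfies $\inf_{v_h\in V_h}\|w-v_h\|_1\to 0$ for every fixed $w\in V$, a standard $\varepsilon$-net argument upgrades this pointwise convergence to uniform convergence over the precompact set $\mathcal{K}$, giving $\eta_a(h)=\sup_{w\in\mathcal{K}}\inf_{v_h}\|w-v_h\|_1\to 0$. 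The same reasoning applied to $T_*$ yields $\eta_a^*(h)=o(1)$.

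For the second part I would use the Galerkin projection $P_h$ as the trial approximant. Fix $u\in M(\lambda)$ with $\|u\|_b=1$ and set $e=u-P_hu$; by definition of $P_h$ one has the Galerkin orthogonality $a(e,v_h)=0$ for all $v_h\in V_h$, while the $V$-ellipticity (\ref{Ellipticity_Of_a}), inherited by $V_h\subset V$, yields quasi-optimality $\|e\|_1\lesssim\inf_{v_h}\|u-v_h\|_1\lesssim\delta_h(\lambda)$, where I use the norm equivalence $\|\cdot\|_1\approx\|\cdot\|_b$ on the finite-dimensional space $M(\lambda)$. Since $b(\cdot,\cdot)$ is a Hermitian inner product, $\|e\|_b=\sup_{\|w\|_b=1}|b(e,w)|$. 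The key step is to represent, for each test function $w$, $b(e,w)=a(e,T_*w)$ from the defining relation of $T_*$, and then to insert Galerkin orthogonality: $b(e,w)=a(e,T_*w-v_h)$ for every $v_h\in V_h$. Boundedness of $a(\cdot,\cdot)$ and the definition of $\eta_a^*(h)$ then give $|b(e,w)|\lesssim\|e\|_1\inf_{v_h}\|T_*w-v_h\|_1\lesssim\|e\|_1\,\eta_a^*(h)\|w\|_b$. Combining, $\inf_{v_h}\|u-v_h\|_b\le\|e\|_b\lesssim\eta_a^*(h)\|e\|_1\lesssim\eta_a^*(h)\delta_h(\lambda)$, and taking the supremum over $u$ proves $\rho_h(\lambda)\lesssim\eta_a^*(h)\delta_h(\lambda)$. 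The estimate for $\rho_h^*(\lambda)$ follows verbatim after interchanging the roles of $T$ with $T_*$ and the two arguments of $a(\cdot,\cdot)$.

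I expect the main obstacle to be the first part rather than the duality: asserting that $T$ maps the $\|\cdot\|_b$-unit ball (which is \emph{unbounded} in $\|\cdot\|_1$) to a precompact subset of $(V,\|\cdot\|_1)$ requires the smoothing/regularity gain of the elliptic source problem, not merely the abstract compactness of $T$ as an operator on $V$; on a general polygonal domain this must be justified through the fractional shift theorem. In the duality part the only point needing care is the appearance of the \emph{adjoint} factor $\eta_a^*(h)$ --- a feature absent in the self-adjoint theory --- which arises precisely because the dual problem for the $\|\cdot\|_b$-error is governed by $T_*$ rather than $T$.
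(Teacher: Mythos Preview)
The paper does not prove this lemma at all: it simply records the result with a citation to Babu\v{s}ka--Osborn \cite[Lemmas 3.3 and 3.4]{BabuskaOsborn}. Your proposal supplies an actual argument, and it is correct and is in fact the standard route (compactness plus $\varepsilon$-net for the $o(1)$ statements, Aubin--Nitsche duality via Galerkin orthogonality and the adjoint solution operator for the two inequalities), which is also what underlies the cited reference. Your handling of the two delicate points --- that precompactness of $\{Tf:\|f\|_b\le 1\}$ in $(V,\|\cdot\|_1)$ requires an elliptic regularity shift rather than mere compactness of $T\in\mathcal{L}(V)$, and that the mismatch between the normalization $\|u\|_b=1$ in $\rho_h$ and $\|u\|_1=1$ in $\delta_h$ is resolved by norm equivalence on the finite-dimensional space $M(\lambda)$ --- is accurate.
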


The following theorem is a basic tool for our error estimates.
\begin{theorem}\label{Error_Estimate_Theorem}(\cite[Section 8]{BabuskaOsborn})
When the mesh size $h$ is small enough, we have
\begin{eqnarray}
&&\Theta(M(\lambda),M_h(\lambda))\lesssim \delta_h(\lambda),\ \ \
\Theta(M^*(\lambda),M^*_h(\lambda))\lesssim \delta^*_h(\lambda),\\
&&\Phi(M(\lambda),M_h(\lambda))\lesssim \rho_h(\lambda),\ \ \
\Phi(M^*(\lambda),M^*_h(\lambda))\lesssim \rho^*_h(\lambda),\\
&&|\lambda-\widehat{\lambda}_{h}|\lesssim \delta_h(\lambda)\delta_h^*(\lambda),
\end{eqnarray}
where $\widehat{\lambda}_h={ \frac{1}{m}}\sum_{j=1}^m\lambda_{j,h}$ with $\lambda_{1,h},\cdots,\lambda_{m,h}$
converging to $\lambda$.
\end{theorem}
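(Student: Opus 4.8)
The plan is to recognize Theorem~\ref{Error_Estimate_Theorem} as an instance of the classical spectral approximation theory for compact operators, carried out for $T$ and its Galerkin approximation $T_h:=P_hT$ on $V$ (and $T_h^*:=P_h^*T_*$ for the adjoint). Since $T-T_h=(I-P_h)T$, the quasi-optimality of $P_h$ guaranteed by ellipticity \eqref{Ellipticity_Of_a} together with the discrete inf--sup condition \eqref{Inf_Sup_Discrete}, combined with the compactness of $T$, yields $\|T-T_h\|_1\to0$ as $h\to0$. Because $\mu=\lambda^{-1}$ is an eigenvalue of $T$ of finite algebraic multiplicity $m$, I would fix a circle $\Gamma$ enclosing $\mu$ and separating it from the rest of $\sigma(T)$, and introduce the Riesz spectral projections
\[
E=\frac{1}{2\pi i}\oint_{\Gamma}(z-T)^{-1}\,dz,\qquad
E_h=\frac{1}{2\pi i}\oint_{\Gamma}(z-T_h)^{-1}\,dz .
\]
For $h$ small enough, norm convergence places the discrete eigenvalues $\mu_{j,h}$ inside $\Gamma$ and the remaining spectrum outside, so that $\mathrm{Range}(E)=M(\lambda)$ and $\mathrm{Range}(E_h)=M_h(\lambda)$.

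The crucial reduction is that only the restriction of $T-T_h$ to $M(\lambda)$ enters the bounds. For $u\in M(\lambda)$ one has $(T-T_h)u=(I-P_h)Tu$, and since $M(\lambda)$ is $T$-invariant and finite dimensional, $Tu\in M(\lambda)$ with $\|Tu\|_1\lesssim\|u\|_1$; quasi-optimality of $P_h$ then gives
\[
\|(T-T_h)u\|_1=\|(I-P_h)Tu\|_1\lesssim\inf_{v_h\in V_h}\|Tu-v_h\|_1\leq \delta_h(\lambda)\|Tu\|_1\lesssim\delta_h(\lambda)\|u\|_1 .
\]
Hence $\|(T-T_h)|_{M(\lambda)}\|_1\lesssim\delta_h(\lambda)$, and symmetrically $\|(T_*-T_h^*)|_{M^*(\lambda)}\|_1\lesssim\delta_h^*(\lambda)$.

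For the gap estimates I would insert these bounds into the resolvent identity $(z-T_h)^{-1}-(z-T)^{-1}=(z-T_h)^{-1}(T-T_h)(z-T)^{-1}$ and integrate over $\Gamma$, which expresses $E-E_h$ through $T-T_h$ and yields the refined bound $\|(E-E_h)|_{M(\lambda)}\|_1\lesssim\|(T-T_h)|_{M(\lambda)}\|_1$. Since $u=Eu$ for $u\in M(\lambda)$, the element $E_hu\in M_h(\lambda)$ satisfies $\|u-E_hu\|_1=\|(E-E_h)u\|_1\lesssim\delta_h(\lambda)$, giving one direction of the gap; the reverse direction follows by a symmetric argument approximating $w_h\in M_h(\lambda)$ by $Ew_h\in M(\lambda)$. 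Thus $\Theta(M(\lambda),M_h(\lambda))\lesssim\delta_h(\lambda)$ and, verbatim for the adjoint operators, $\Theta(M^*(\lambda),M_h^*(\lambda))\lesssim\delta_h^*(\lambda)$. The $\|\cdot\|_b$ estimates $\Phi(M(\lambda),M_h(\lambda))\lesssim\rho_h(\lambda)$ follow by running the same contour argument in the weaker norm, where the finite dimensionality of $M(\lambda)$ and the smoothing property of $T$ let the best $b$-approximation $\rho_h(\lambda)$ play the role $\delta_h(\lambda)$ plays in the $\|\cdot\|_1$ gap.

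The delicate part is the eigenvalue estimate $|\lambda-\widehat{\lambda}_h|\lesssim\delta_h(\lambda)\delta_h^*(\lambda)$, in which the \emph{product} of the primal and adjoint errors, rather than a single power, must be extracted, and which must remain valid for defective eigenvalues. My plan is to pass to $\mu=\lambda^{-1}$ and exploit the trace identities $\mu=\frac1m\,\mathrm{tr}(TE)$ and $\widehat{\mu}_h=\frac1m\,\mathrm{tr}(T_hE_h)$, valid because the nilpotent (Jordan) part of $T|_{M(\lambda)}$ is traceless, so that the average $\widehat{\mu}_h=\frac1m\sum_{j=1}^m\mu_{j,h}$ is insensitive to the Jordan structure. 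Representing $E$ through a basis $\{\phi_j\}$ of $M(\lambda)$ and an $a$-biorthonormal dual basis $\{\psi_j^*\}$ of $M^*(\lambda)$ (i.e. $a(\phi_k,\psi_j^*)=\delta_{jk}$), the difference reduces to first order to
\[
\mu-\widehat{\mu}_h=\frac1m\sum_{j=1}^m a\big((T-T_h)\phi_j,\psi_j^*\big)+\mathcal{O}\!\big(\|(T-T_h)|_{M(\lambda)}\|_1^2\big).
\]
The decisive step is then to invoke Galerkin orthogonality of both $P_h$ and $P_h^*$: since $(T-T_h)\phi_j=(I-P_h)T\phi_j$ is $a$-orthogonal to $V_h$, each term equals $a\big((I-P_h)T\phi_j,(I-P_h^*)\psi_j^*\big)$, which is bounded by $\|(I-P_h)T\phi_j\|_1\,\|(I-P_h^*)\psi_j^*\|_1\lesssim\delta_h(\lambda)\delta_h^*(\lambda)$; transferring back through $\lambda=\mu^{-1}$ gives the claim. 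I expect this doubling mechanism --- proving that the first-order term factors into one primal and one adjoint best-approximation error, correctly paired even along Jordan chains --- to be the main obstacle.
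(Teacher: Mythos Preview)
Your outline is essentially the classical Babu\v{s}ka--Osborn argument and is correct in its main lines: Riesz spectral projections plus the resolvent identity give the gap estimates, with the key reduction being that only $(T-T_h)|_{M(\lambda)}$ matters because $M(\lambda)$ is finite-dimensional and $T$-invariant; and the product structure in the eigenvalue bound comes from the trace representation of the averaged eigenvalue together with the Galerkin orthogonality trick $a((I-P_h)w,\psi^*)=a((I-P_h)w,(I-P_h^*)\psi^*)$.

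However, note that the paper does \emph{not} supply its own proof of this theorem. Theorem~\ref{Error_Estimate_Theorem} is stated as a quotation of known results, with the citation ``\cite[Section 8]{BabuskaOsborn}'' in the theorem header and no proof environment following it; the paper merely invokes it as a black box in the analysis of the correction step (Theorem~\ref{Error_Estimate_One_Correction_Theorem}). So there is nothing to compare against beyond observing that your sketch reconstructs the argument from the cited source. Two minor places where you should be slightly more careful if you write this out in full: (i) the ``reverse direction'' of the gap is usually obtained not by a symmetric contour bound on $(E-E_h)|_{M_h(\lambda)}$ but from the fact that once $\widehat\Theta(M(\lambda),M_h(\lambda))<1$ and $\dim M_h(\lambda)=\dim M(\lambda)=m$, the two one-sided gaps are comparable; (ii) for the $\|\cdot\|_b$ gap you need $T$ viewed as a bounded map into the $b$-completion (i.e.\ $L^2$), so that the contour integrals and resolvent bounds are carried out in that norm --- your phrase ``running the same contour argument in the weaker norm'' is right in spirit but hides the bookkeeping of which space $T_h$ acts on.
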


%
%
%
%

\section{One correction step}
In this section, we present an one-step correction procedure to improve the
accuracy of the current eigenvalue and eigenfunction approximations.
This correction method contains solving some auxiliary source problems
in a finer finite element space and two eigenvalue problems on a coarse
finite element space.

Assume that we have obtained the algebraic eigenpair approximations
$(\lambda_{j,h_k},u_{j,h_k})\in\mathcal{R}\times V_{h_k}$ and
the corresponding adjoint ones
$(\lambda_{j,h_k},u^*_{j,h_k})\in\mathcal{R}\times V_{h_k}$ for $j=i,\cdots,i+m-1$, where
eigenvalues $\{\lambda_{j,h_k}\}_{j=i}^{i+m-1}$ converge to the desired eigenvalue
$\lambda_i$ of (\ref{Eigenvalue_Problem_Weak}).
Now we introduce a correction step to improve the accuracy of the
current eigenpair approximations.
Let $V_{h_{k+1}}\subset V$ be the conforming finite element space based on a finer
mesh $\mathcal{T}_{h_{k+1}}$ which is produced by refining $\mathcal{T}_{h_k}$ in the regular way.
We start from a conforming linear finite element space $V_H$ on the
coarsest mesh $\mathcal{T}_H$ to design the following one correction step.

\begin{algorithm}\label{Correction_Step}
One Correction Step

\begin{enumerate}

\item  For $j=i,\cdots,i+m-1$ Do

Solve the following two boundary value problems:

Find $\widetilde{u}_{j,h_{k+1}}\in V_{h_{k+1}}$ such that
\begin{eqnarray}\label{aux_problem}
a(\widetilde{u}_{j,h_{k+1}},v_{h_{k+1}})&=&b(u_{j,h_k},v_{h_{k+1}}),\ \
\ \forall v_{h_{k+1}}\in V_{h_{k+1}}.
\end{eqnarray}
Find  $\widetilde{u}^*_{j,h_{k+1}}\in V_{h_{k+1}}$ such that
\begin{eqnarray}\label{aux_problem_Adjoint}
a(v_{h_{k+1}},\widetilde{u}^*_{j,h_{k+1}})&=&b(v_{h_{k+1}},u^*_{j,h_k}),\ \
\ \forall v_{h_{k+1}}\in V_{h_{k+1}}.
\end{eqnarray}
End Do
\item  Define two new finite element spaces
\begin{eqnarray*}
V_{H,h_{k+1}}=V_H\oplus{\rm span}\{\widetilde{u}_{i,h_{k+1}},\cdots,\widetilde{u}_{i+m-1,h_{k+1}}\}
\end{eqnarray*}
and
\begin{eqnarray*}
V^*_{H,h_{k+1}}=V_H\oplus{\rm span}\{\widetilde{u}^*_{i,h_{k+1}},\cdots,\widetilde{u}^*_{i+m-1,h_{k+1}}\}.
\end{eqnarray*}


Solve the following two eigenvalue problems:

Find $(\lambda_{j,h_{k+1}},u_{j,h_{k+1}})\in\mathcal{R}\times V_{H,h_{k+1}}$ such
that $b(u_{j,h_{k+1}},u_{j,h_{k+1}})=1$ and
\begin{eqnarray}\label{Eigen_Augment_Problem}
\hskip-0.4cm a(u_{j,h_{k+1}},v_{H,h_{k+1}})=\lambda_{j,h_{k+1}} b(u_{j,h_{k+1}},v_{H,h_{k+1}}),\
\forall v_{H,h_{k+1}}\in V^*_{H,h_{k+1}}.
\end{eqnarray}
Find $(\lambda_{j,h_{k+1}},u^*_{j,h_{k+1}})\in\mathcal{R}\times V^*_{H,h_{k+1}}$ such
that $b(u^*_{j,h_{k+1}},u^*_{j,h_{k+1}})=1$ and
\begin{eqnarray}\label{Eigen_Augment_Problem_Adjoint}
\hskip -0.4cm a(v_{H,h_{k+1}},u^*_{j,h_{k+1}})={\lambda}_{j,h_{k+1}} b(v_{H,h_{k+1}},u^*_{j,h_{k+1}}),\
\forall v_{H,h_{k+1}}\in V_{H,h_{k+1}}.
\end{eqnarray}

{ \item Choose $2q$ eigenpairs $\{\lambda_{j,h_{k+1}}, u_{j,h_{k+1}}\}_{j=i}^{i+q-1}$ and
$\{\lambda_{j,h_{k+1}}, u^*_{j,h_{k+1}}\}_{j=i}^{i+q-1}$ to define  
 two new geometric eigenspaces}
\begin{eqnarray*}
Q_{h_{k+1}}(\lambda_i)={\rm span}\big\{u_{i,h_{k+1}},\cdots, u_{i+q-1,h_{k+1}}\big\}
\end{eqnarray*}
and
\begin{eqnarray*}
Q^*_{h_{k+1}}(\lambda_i)={\rm span}\big\{u^*_{i,h_{k+1}},\cdots, u^*_{i+q-1,h_{k+1}}\big\}.
\end{eqnarray*}
Based on these two geometric eigencpases, compute the corresponding algebraic eigenspaces
\begin{eqnarray}
M_{h_{k+1}}(\lambda_i)={\rm span}\big\{u_{i,h_{k+1}},\cdots, u_{i+m-1,h_{k+1}}\big\}
\end{eqnarray}
and
\begin{eqnarray}
M^*_{h_{k+1}}(\lambda_i)={\rm span}\big\{u_{i,h_{k+1}},\cdots, u_{i+m-1,h_{k+1}}\big\}.
\end{eqnarray}
\end{enumerate}
{ The final output is:}
\begin{eqnarray*}
&&\big(\{\lambda_{j,h_{k+1}}\}_{j=i}^{i+m-1},M_{h_{k+1}}(\lambda_i),M^*_{h_{k+1}}(\lambda_i)\big)=\nonumber\\
&&\ \ \ \ \quad\quad {\it
Correction}\big(V_H,\{\lambda_{j,h_k}\}_{j=i}^{i+m-1}{,}M_{h_k}(\lambda_i),M^*_{h_k}(\lambda_i),V_{h_{k+1}}\big).
\end{eqnarray*}
\end{algorithm}
\begin{remark}
Since in Step 1 of Algorithm \ref{Correction_Step}, the solving process for the boundary value problems
is independent of each other for different $j$, we can implement them in parallel. {\color{black}Furthermore,
the designing for this algorithm does not need the ascent assumption as in \cite{Kolman,YangFan}}.
\end{remark}

\begin{theorem}\label{Error_Estimate_One_Correction_Theorem}
Assume there exist real numbers $\varepsilon_{h_k}(\lambda_i)$ and $\varepsilon_{h_k}^*(\lambda_i)$ such that
the given eigenpairs $\big(\{\lambda_{j,h_k}\}_{j=i}^{i+m-1}, M_{h_k}(\lambda_i),M^*_{h_k}(\lambda_i)\big)$
in {\it One Correction Step \ref{Correction_Step}} have following error estimates
\begin{eqnarray}
\Theta(M(\lambda_i),M_{h_k}(\lambda_i)) &\lesssim& \varepsilon_{h_k}(\lambda_i),\\
\Theta(M^*(\lambda_i),M^*_{h_k}(\lambda_i)) &\lesssim& \varepsilon^*_{h_k}(\lambda_i),\label{Error_u_h_1}\\
\Phi(M(\lambda_i),M_{h_k}(\lambda_i)) &\lesssim&\eta_a^*(H)\varepsilon_{h_k}(\lambda_i),\\
\Phi(M^*(\lambda_i),M^*_{h_k}(\lambda_i))&\lesssim& \eta_a(H)\varepsilon^*_{h_k}(\lambda_i).\label{Error_u_h_1_nagative}
\end{eqnarray}
Then after one correction step, the resultant eigenpair approximation\\
$(\{\lambda_{j,h_{k+1}}\}_{j=i}^{i+m-1},M_{h_{k+1}}(\lambda_i),M^*_{h_{k+1}}(\lambda_i))$
have following error estimates
\begin{eqnarray}
\Theta(M(\lambda_i),M_{h_{k+1}}(\lambda_i)) &\lesssim& \varepsilon_{h_{k+1}}(\lambda_i),\label{Estimate_u_u_h_2}\\
\Theta(M^*(\lambda_i),M^*_{h_{k+1}}(\lambda_i)) &\lesssim& \varepsilon^*_{h_{k+1}}(\lambda_i),\label{Estimate_u_u_h_2_adjoint}\\
\Phi(M(\lambda_i),M_{h_{k+1}}(\lambda_i)) &\lesssim&
\eta_a^*(H) \varepsilon_{h_{k+1}}(\lambda_i),\label{Estimate_u_h_2_Nagative}\\
\Phi(M^*(\lambda_i),M^*_{h_{k+1}}(\lambda_i)) &\lesssim&
\eta_a(H) \varepsilon^*_{h_{k+1}}(\lambda_i),\label{Estimate_u_h_2_Nagative_Adjoint}
\end{eqnarray}
where
$\varepsilon_{h_{k+1}}(\lambda_i):=\eta_a^*(H)\varepsilon_{h_k}(\lambda_i)+\delta_{h_{k+1}}(\lambda_i)$ and
$\varepsilon^*_{h_{k+1}}(\lambda_i):=\eta_a(H)\varepsilon^*_{h_k}(\lambda_i)+\delta^*_{h_{k+1}}(\lambda_i)$.
\end{theorem}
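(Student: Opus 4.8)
The plan is to reduce everything to a single approximation estimate for the augmented space $V_{H,h_{k+1}}$ and then feed that estimate into the abstract spectral approximation theory already recorded in Theorem \ref{Error_Estimate_Theorem} and Lemma \ref{Negative_norm_estimate_Lemma}. The first move is to recognize the auxiliary source problems as projected operator actions: comparing (\ref{aux_problem}) with the definitions of $T$ and $P_{h_{k+1}}$ shows that $\widetilde{u}_{j,h_{k+1}}=P_{h_{k+1}}Tu_{j,h_k}$, and likewise $\widetilde{u}^*_{j,h_{k+1}}=P_{h_{k+1}}^*T_*u^*_{j,h_k}$. Hence by linearity $\mathrm{span}\{\widetilde{u}_{j,h_{k+1}}\}=P_{h_{k+1}}T\,M_{h_k}(\lambda_i)\subset V_{H,h_{k+1}}$, and the whole argument will hinge on showing that $\widehat{\Theta}(M(\lambda_i),V_{H,h_{k+1}})\lesssim\varepsilon_{h_{k+1}}(\lambda_i)$, i.e. that $V_{H,h_{k+1}}$ approximates the true algebraic eigenspace as well as the finest space $V_{h_{k+1}}$ does, with a correction term coming from the coarse level.

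Second, I would prove the central estimate. Fix $u\in M(\lambda_i)$ with $\|u\|_1=1$. Because $M(\lambda_i)=N((T-\mu)^\alpha)$ is $T$-invariant and $T$ restricted to it is invertible (as $\mu\neq0$), I can write $u=T\xi$ with $\xi\in M(\lambda_i)$ and $\|\xi\|_1\lesssim1$; this is exactly what lets me avoid unwinding the Jordan-chain relation (\ref{Iteration_Scheme}). Let $\xi_h\in M_{h_k}(\lambda_i)$ be the $\|\cdot\|_b$-best approximation of $\xi$, so that $P_{h_{k+1}}T\xi_h\in\mathrm{span}\{\widetilde{u}_{j,h_{k+1}}\}$, and decompose
\begin{eqnarray*}
u-P_{h_{k+1}}T\xi_h=(I-P_{h_{k+1}})T\xi+P_{h_{k+1}}T(\xi-\xi_h).
\end{eqnarray*}
For the first term, $T\xi\in M(\lambda_i)$ and the quasi-optimality of $P_{h_{k+1}}$ (from the discrete inf--sup (\ref{Inf_Sup_Discrete})) gives $\|(I-P_{h_{k+1}})T\xi\|_1\lesssim\delta_{h_{k+1}}(\lambda_i)$. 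For the second term, the smoothing estimate $\|Tw\|_1\lesssim\|w\|_b$ (a consequence of the continuous inf--sup applied to $a(Tw,\cdot)=b(w,\cdot)$ together with $\|v\|_b\lesssim\|v\|_1$) and the stability of $P_{h_{k+1}}$ yield $\|P_{h_{k+1}}T(\xi-\xi_h)\|_1\lesssim\|\xi-\xi_h\|_b\lesssim\Phi(M(\lambda_i),M_{h_k}(\lambda_i))\lesssim\eta_a^*(H)\varepsilon_{h_k}(\lambda_i)$, where the last bound is the weak-norm hypothesis (\ref{Error_u_h_1_nagative}). Adding the two pieces gives $\widehat{\Theta}(M(\lambda_i),V_{H,h_{k+1}})\lesssim\delta_{h_{k+1}}(\lambda_i)+\eta_a^*(H)\varepsilon_{h_k}(\lambda_i)=\varepsilon_{h_{k+1}}(\lambda_i)$.

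Third, I would conclude by invoking the abstract theory with $V_{H,h_{k+1}}$ playing the role of $V_h$. Since $V_H\subset V_{H,h_{k+1}}$, the discrete inf--sup persists on $V_{H,h_{k+1}}$ for $H$ and $h_{k+1}$ small enough, so Theorem \ref{Error_Estimate_Theorem} applies and gives $\Theta(M(\lambda_i),M_{h_{k+1}}(\lambda_i))\lesssim\widehat{\Theta}(M(\lambda_i),V_{H,h_{k+1}})\lesssim\varepsilon_{h_{k+1}}(\lambda_i)$, which is (\ref{Estimate_u_u_h_2}). Moreover, because $V_{H,h_{k+1}}\supset V_H$, the dual approximation quantity $\eta_a^*$ associated with $V_{H,h_{k+1}}$ is bounded by $\eta_a^*(H)$ (enlarging the space only shrinks the best-approximation supremum), so Lemma \ref{Negative_norm_estimate_Lemma} yields $\Phi(M(\lambda_i),M_{h_{k+1}}(\lambda_i))\lesssim\eta_a^*(H)\,\widehat{\Theta}(M(\lambda_i),V_{H,h_{k+1}})\lesssim\eta_a^*(H)\varepsilon_{h_{k+1}}(\lambda_i)$, which is (\ref{Estimate_u_h_2_Nagative}). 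The adjoint estimates (\ref{Estimate_u_u_h_2_adjoint}) and (\ref{Estimate_u_h_2_Nagative_Adjoint}) follow verbatim with $T$, $P_{h_{k+1}}$, $\eta_a^*$ replaced by $T_*$, $P_{h_{k+1}}^*$, $\eta_a$.

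I expect the main obstacle to be two intertwined points in the second step. The genuinely new difficulty, compared with the self-adjoint multilevel theory, is that without the ascent-one assumption $M(\lambda_i)$ contains true generalized eigenvectors, so the convenient identity $u=\lambda Tu$ is false; the remedy is to use $T$-invariance and invertibility of $T$ on the finite-dimensional space $M(\lambda_i)$ to write $u=T\xi$, keeping the whole chain structure implicit. The second delicate point is harvesting the sharp factor $\eta_a^*(H)$ rather than the coarser $\varepsilon_{h_k}(\lambda_i)$ in the cross term, which is forced by measuring $\xi-\xi_h$ in the weak norm $\|\cdot\|_b$ and exploiting the smoothing of $T$; using the energy-norm hypothesis instead would lose a factor of $\eta_a^*(H)$ and break the intended recursion. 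A minor technical item to dispatch along the way is confirming that the discrete inf--sup survives on the augmented space $V_{H,h_{k+1}}$, which is where applicability of Theorem \ref{Error_Estimate_Theorem} rests.
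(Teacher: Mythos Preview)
Your proposal is correct and lands on the same overall strategy as the paper: bound the best-approximation quantity $\widehat{\Theta}(M(\lambda_i),V_{H,h_{k+1}})$ by $\varepsilon_{h_{k+1}}(\lambda_i)$ and then invoke Theorem~\ref{Error_Estimate_Theorem} and Lemma~\ref{Negative_norm_estimate_Lemma} on the augmented space, using $V_H\subset V_{H,h_{k+1}}$ to replace $\eta_a^*$ by $\eta_a^*(H)$. The execution of the key estimate differs, however. The paper encodes the generalized-eigenvector recursion (\ref{Iteration_Scheme}) explicitly: it fixes a Jordan basis $U=(u_i,\dots,u_{i+m-1})^T$ of $M(\lambda_i)$, writes $a(U,v)=b(\mathcal{P}U,v)$ for a lower-triangular matrix $\mathcal{P}$ with $\lambda_i$ on the diagonal, and then shows each $\widetilde u_{j,h_{k+1}}$ is close to $\mathcal{R}_j^T\mathcal{P}^{-1}U\in M(\lambda_i)$; this bounds the gap in the \emph{reverse} direction $\widehat{\Theta}(W_{h_{k+1}},M(\lambda_i))$, and the paper then swaps to $\widehat{\Theta}(M(\lambda_i),W_{h_{k+1}})$ via an (implicit) equal-dimension argument. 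Your route is the operator-theoretic version of the same idea: $\mathcal{P}^{-1}$ is nothing but the matrix of $T|_{M(\lambda_i)}$, so writing $u=T\xi$ is exactly ``multiplying by $\mathcal{P}^{-1}$,'' but you work in the direct direction and thereby skip the swap. What the paper's formulation buys is a visible link to the Jordan structure; what yours buys is a shorter and slightly cleaner argument that does not need to introduce coordinates or justify the direction reversal. One small slip: the weak-norm hypothesis you invoke is the third displayed assumption $\Phi(M(\lambda_i),M_{h_k}(\lambda_i))\lesssim\eta_a^*(H)\varepsilon_{h_k}(\lambda_i)$, not the labeled adjoint estimate (\ref{Error_u_h_1_nagative}).
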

\begin{proof}
{\color{black}
From (\ref{Iteration_Scheme}),  there exist the basis functions $\big\{u_j\big\}_{j=i}^{i+m-1}$ of $M(\lambda_i)$
such that
\begin{eqnarray}
a(u_j,v)&=&b\left(\sum_{k=i}^{i+m-1}p_{jk}(\lambda_i)u_k, v\right),\ \ \ \forall v\in V,
\end{eqnarray}
where $p_{jk}(\cdot)$ denotes a polynomial of degree no more than $\alpha$ for $k=i,\cdots,j$ with $p_{jj}(\lambda_i)=\lambda_i$
and $p_{jk}(\lambda_i)=0$ for $j<k\leq i+m-1$.
We can define a matrix $\mathcal P:=(p_{j+1-i,k+1-i})_{i\leq j,k\leq i+m-1}\in \mathcal C^{q\times q}$ such that
\begin{eqnarray}
a(U,v)=b(\mathcal P U, v),\ \ \ \ \forall v\in V,
\end{eqnarray}
where $U:=(u_i,\cdots, u_{i+m-1})^T$.
It is easy to know  that the matrix $\mathcal P$ is nonsingular providing $\lambda_i\neq 0$.

For each $\widetilde u_{j,h_{k+1}}$, from the definitions of $\Theta(M(\lambda_i),M_{h_k}(\lambda_i))$ and $\Phi(M(\lambda_i),M_{h_k}(\lambda_i))$,
there exist a vector $\mathcal R_j := (c_1,\cdots,c_m)^T\in \mathcal C^{m\times 1}$ such that
\begin{eqnarray}
\|u_{j,h_k}-\mathcal R_j^T U\|_1 &\lesssim & \varepsilon_{h_k}(\lambda_i),
\ \ \quad \ \ \ \ \ \ \ {\rm for}\ j=i,\cdots,i+m-1,\\
\|u_{j,h_k}-\mathcal R_j^TU\|_0 &\lesssim& \eta_a^*(H)\varepsilon_{h_k}(\lambda_i),
\ \ \ \ {\rm for}\ j=i,\cdots,i+m-1.
\end{eqnarray}

For any $v_{h_{k+1}}\in V_{h_{k+1}}$, we have
\begin{eqnarray}\label{Error_Estimate_1}
&&{|}a(\widetilde u_{j,h_{k+1}}-P_{h_{\ell+1}}\mathcal R_j^T\mathcal P^{-1}U,v_{h_{k+1}}){|}
={|}a(\widetilde u_{j,h_{k+1}}-\mathcal R_j^T\mathcal P^{-1}U,v_{h_{k+1}}){|}\nonumber\\
&=&b(u_j^{h_k}-\mathcal R_j^T\mathcal{P}^{-1}\mathcal PU,v_{h_{k+1}})
= {|}b(u_{j}^{h_k}-\mathcal R_j^TU,v_{h_{k+1}}){|}\nonumber\\
&\lesssim& \eta_a^*(H)\varepsilon_{h_k}(\lambda_i)\|v_{h_{k+1}}\|_1, \ \ \ \ {\rm for}\ j=i,\cdots,i+m-1.
\end{eqnarray}
From (\ref{Inf_Sup_Discrete}) and  (\ref{Error_Estimate_1}), the following estimate holds
\begin{eqnarray}
\|\widetilde U_{j,h_{k+1}}-P_{h_{k+1}}\mathcal R_j^T\mathcal P^{-1}U\|_1 &\lesssim&
\eta_a^*(H)\varepsilon_{h_k}(\lambda_i),\nonumber\\
&& \ \ {\rm for}\ j=i,\cdots,i+m-1.
\end{eqnarray}
Combining with the error estimate
\begin{eqnarray}
\|\mathcal R_j^T\mathcal P^{-1}U-P_{h_{\ell+1}}\mathcal R_j^T\mathcal P^{-1}U\|_1
&\lesssim& \delta_{h_{k+1}}(\lambda_i),\nonumber\\
&&\ \ {\rm for}\ j=i,\cdots,i+m-1,
\end{eqnarray}
we have
\begin{eqnarray}\label{Error_Estimates_Tilde_u_J}
\|\widetilde u_{j,h_{k+1}}-\mathcal R_j^T\mathcal P^{-1}U\|_1 &\lesssim&
\eta_a^*(H)\varepsilon_{h_k}(\lambda_i)+\delta_{h_{k+1}}(\lambda_i),\nonumber\\
&&\ \ \ \ \  \ \ {\rm for}\ j=i,\cdots,i+m-1.
\end{eqnarray}
}
After Step 3, from the definition of $V_{H,h_{k+1}}$ and (\ref{Error_Estimates_Tilde_u_J}), we derive
\begin{eqnarray}\label{Definition_Varepsilon_k+1}
&&\sup_{u \in M(\lambda_i),\|u\|_1=1}
\inf_{v_{H,h_{k+1}}\in V_{H,h_{k+1}}}\| u-v_{H,h_{\ell+1}}\|_1\nonumber\\
&\leq& \sup_{u\in M(\lambda_i),\|u\|_1=1}
\inf_{v_{h_{k+1}}\in W_{h_{k+1}}}\|u-v_{h_{k+1}}\|_1\nonumber\\
&\lesssim& \sup_{v_{h_{k+1}}\in W_{h_{k+1}}, \|v_{h_{k+1}}\|_1=1}
\inf_{u\in M(\lambda_i)}\|v_{h_{\ell+1}}-u\|_1\nonumber\\
&\lesssim& \max_{j=i,\cdots,i+m-1}\|\widetilde u_{j,h_{k+1}}-\mathcal R_j^T\mathcal P^{-1} U\|_1\nonumber\\
&\lesssim& \eta_a^*(H)\varepsilon_{h_k}(\lambda_i)+\delta_{h_{k+1}}(\lambda_i),
\end{eqnarray}
where $W_{h_{k+1}}:={\rm span}\{\widetilde u_i^{h_{k+1}},\cdots,\widetilde u_{i+m-1}^{h_{k+1}}\}$.

{ Similarly},
\begin{eqnarray}\label{Definition_Varepsilon_k+1_Adjoint}
&&\sup_{u_*\in M^*(\lambda),\|u\|_1=1}\inf_{v_{H,h_{k+1}}\in V^*_{H,h_{k+1}}}\|u^*-v_{H,h_{k+1}}\|_1\nonumber\\
 &\lesssim& \eta_a(H)\varepsilon^*_{h_k}(\lambda_i)+\delta^*_{h_{k+1}}(\lambda_i).
\end{eqnarray}
Then from the error estimate results stated in Theorem \ref{Error_Estimate_Theorem}
for the eigenvalue problem (see, e.g., \cite[Section 8]{BabuskaOsborn})
and (\ref{Definition_Varepsilon_k+1})-(\ref{Definition_Varepsilon_k+1_Adjoint}),
the following error estimates hold
\begin{eqnarray}
\Theta(M(\lambda_i),M_{h_{k+1}}(\lambda_i))
&\lesssim& \eta_a^*(H)\varepsilon_{h_k}(\lambda_i)+\delta_{h_{k+1}}(\lambda_i),\\
\Theta(M^*(\lambda_i),M^*_{h_{k+1}}(\lambda_i))
&\lesssim& \eta_a(H)\varepsilon^*_{h_k}(\lambda_i)+\delta^*_{h_{k+1}}(\lambda_i).
\end{eqnarray}
These are the desired estimates (\ref{Estimate_u_u_h_2}) and (\ref{Estimate_u_u_h_2_adjoint}).
Furthermore,
\begin{eqnarray}
\Phi(M(\lambda_i),M_{h_{k+1}}(\lambda_i))&\lesssim & \widetilde{\eta}_a^*(H)\sup_{u\in M(\lambda),\|u\|_1=1}
\inf_{v_{H,h_{k+1}}\in V_{H,h_{k+1}}}\|u-v_{H,h_{k+1}}\|_1\nonumber\\
&\leq & \eta_a^*(H)\varepsilon_{h_{k+1}}(\lambda_i),
\end{eqnarray}
where
\begin{eqnarray}
\widetilde{\eta}_a^*(H):=\sup_{f\in V,\|f\|_b=1}\inf_{v_{H,h_{k+1}}\in V_{H,h_{k+1}}}
\|T_*f-v_{H,h_{k+1}}\|_1\leq \eta_a^*(H).
\end{eqnarray}
Then we obtain (\ref{Estimate_u_h_2_Nagative}). A similar argument leads to (\ref{Estimate_u_h_2_Nagative_Adjoint}).
\end{proof}

\section{Multilevel correction scheme}

In this section, we introduce a multilevel correction
scheme based on the {\it One Correction Step} \ref{Correction_Step}.
The method improves accuracy after each correction step, which is different
from the two-grid methods in \cite{Kolman,XuZhou,YangFan}.

\begin{algorithm}\label{Multi_Correction}
Multilevel Correction Scheme
\begin{enumerate}
\item Construct a coarse conforming finite element space $V_{h_1}$ on $\mathcal{T}_{h_1}$
such that $V_H\subset V_{h_1}$
and solve the following two eigenvalue problems:

Find $(\lambda_{h_1},u_{h_1})\in \mathcal{R}\times V_{h_1}$ such that
$b(u_{h_1},u_{h_1})=1$ and
\begin{eqnarray}\label{Initial_Eigen_Problem}
a(u_{h_1},v_{h_1})&=&\lambda_{h_1}b(u_{h_1},v_{h_1}),\ \ \ \ \forall v_{h_1}\in V_{h_1}.
\end{eqnarray}
Find $(\lambda_{h_1},u^*_{h_1})\in \mathcal{R}\times V_{h_1}$ such that
$b(u^*_{h_1},u^*_{h_1})=1$ and
\begin{eqnarray}\label{Initial_Eigen_Problem_Adjoint}
a(v_{h_1}, u^*_{h_1})&=&\lambda_{h_1}b(v_{h_1}, u_{h_1}^*),\ \ \ \ \forall v_{h_1}\in V_{h_1}.
\end{eqnarray}

Choose $2q$ eigenpairs $\{\lambda_{j,h_1},u_{j,h_j}\}_{j=i}^{i+q-1}$ and
$\{\lambda_{j,h_1},u^*_{j,h_j}\}_{j=i}^{i+q-1}$
which approximate the desired
eigenvalue $\lambda_i$ and its geometric eigenspaces of the eigenvalue problem
 (\ref{Initial_Eigen_Problem})
and its adjoint one (\ref{Initial_Eigen_Problem_Adjoint}).
Based on these two geometric eigenspace, we compute the
corresponding algebraic eigenspaces $M_{h_1}(\lambda_i):={\rm space}\big\{u_{i,h_1},\cdots,u_{i+m-1,h_1}\big\}$
and $M^*_{h_1}(\lambda_i):={\rm space}\big\{u^*_{i,h_1},\cdots,u_{i+m-1,h_1}^*\big\}$.
Then do the following correction steps.
\item Construct a series of finer finite element
spaces $V_{h_2},\cdots,V_{h_n}$ on the sequence of nested meshes $\mathcal{T}_{h_2},\cdots,\mathcal{T}_{h_n}$
(cf. \cite{BrennerScott,Ciarlet}).
\item Do $k=1,\cdots,n-1$\\
Obtain new eigenpair approximations
$(\{\lambda_{j,h_{k+1}}\}_{j=i}^{i+m-1},M_{h_{k+1}}(\lambda_i),M^*_{h_{k+1}}(\lambda_i))$
by Algorithm \ref{Correction_Step}
\begin{eqnarray*}
&&\big(\{\lambda_{j,h_{k+1}}\}_{j=i}^{i+m-1},M_{h_{k+1}}(\lambda_i),M^*_{h_{k+1}}(\lambda_i)\big)=\nonumber\\
&&\ \ \ \ \quad\quad {\it
Correction}\big(V_H,\{\lambda_{j,h_k}\}_{j=i}^{i+m-1},M_{h_k}(\lambda_i),M^*_{h_k}(\lambda_i),V_{h_{k+1}}\big).
\end{eqnarray*}
End Do
\end{enumerate}
Finally, we obtain eigenpair approximations
$\big(\{\lambda_{j,h_n}\}_{j=i}^{i+m-1},M_{h_n}(\lambda_i),M^*_{h_n}(\lambda_i)\big)$.
\end{algorithm}

\begin{theorem}
After implementing Algorithm \ref{Multi_Correction}, the resultant
eigenpair approximation $(\{\lambda_{j,h_n}\}_{j=i}^{i+m-1},M_{h_n}(\lambda_i),M^*_{h_n}(\lambda_i))$
 has following error estimates
\begin{eqnarray}
\Theta(M(\lambda_i),M_{h_n}(\lambda_i)) &\lesssim& \varepsilon_{h_n}(\lambda_i),\label{Multi_Correction_Err_fun}\\
\Phi(M(\lambda_i),M_{h_n}(\lambda_i)) &\lesssim&\eta_a^*(H) \varepsilon_{h_n}(\lambda_i),\label{Multi_Correction_Err_fun_Weak}\\
\Theta(M^*(\lambda_i),M^*_{h_n}(\lambda_i)) &\lesssim& \varepsilon^*_{h_n}(\lambda_i),\label{Multi_Correction_Err_fun_Adjoint}\\
\Phi(M^*(\lambda_i),M^*_{h_n}(\lambda_i)) &\lesssim&\eta_a(H) \varepsilon^*_{h_n}(\lambda_i),\label{Multi_Correction_Err_fun_Weak_Adjoint}\\
|\widehat{\lambda}_{i,h_n}-\lambda_i|&\lesssim&\varepsilon_{h_n}(\lambda_i)\varepsilon^*_{h_n}(\lambda_i),
\label{Multi_Correction_Err_eigen}
\end{eqnarray}
where $\widehat{\lambda}_{i,h_n}=\frac{1}{m}\sum_{j=i}^{i+m-1}\lambda_{j,h_n}$,
$\varepsilon_{h_n}(\lambda_i)=\sum_{k=1}^{n}\eta_a^*(H)^{n-k}\delta_{h_k}(\lambda_i)$ and\\
$\varepsilon^*_{h_n}(\lambda_i)=\sum_{k=1}^{n}\eta_a(H)^{n-k}\delta^*_{h_k}(\lambda_i)$.
\end{theorem}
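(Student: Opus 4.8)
The plan is to prove the four gap estimates (\ref{Multi_Correction_Err_fun})--(\ref{Multi_Correction_Err_fun_Weak_Adjoint}) simultaneously by induction on the level index, using \emph{One Correction Step} Theorem \ref{Error_Estimate_One_Correction_Theorem} as the inductive engine, and then to obtain the eigenvalue estimate (\ref{Multi_Correction_Err_eigen}) by a single application of Theorem \ref{Error_Estimate_Theorem} at the finest level. The decisive structural feature that makes the induction close is that the hypotheses of Theorem \ref{Error_Estimate_One_Correction_Theorem} and its conclusions have identical form, the only change being the update $\varepsilon_{h_k}\mapsto\varepsilon_{h_{k+1}}=\eta_a^*(H)\varepsilon_{h_k}+\delta_{h_{k+1}}$. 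For the base case $k=1$, Step 1 of Algorithm \ref{Multi_Correction} solves the eigenvalue problem and its adjoint directly in $V_{h_1}$, so Theorem \ref{Error_Estimate_Theorem} with $h=h_1$ gives $\Theta(M(\lambda_i),M_{h_1}(\lambda_i))\lesssim\delta_{h_1}(\lambda_i)$ and $\Phi(M(\lambda_i),M_{h_1}(\lambda_i))\lesssim\rho_{h_1}(\lambda_i)$, together with their adjoints. By Lemma \ref{Negative_norm_estimate_Lemma}, $\rho_{h_1}(\lambda_i)\lesssim\eta_a^*(h_1)\delta_{h_1}(\lambda_i)$; since $V_H\subset V_{h_1}$ by construction, the approximation quantity is monotone, $\eta_a^*(h_1)\le\eta_a^*(H)$, so $\Phi(M(\lambda_i),M_{h_1}(\lambda_i))\lesssim\eta_a^*(H)\delta_{h_1}(\lambda_i)$. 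As $\varepsilon_{h_1}(\lambda_i)=\delta_{h_1}(\lambda_i)$ and $\varepsilon^*_{h_1}(\lambda_i)=\delta^*_{h_1}(\lambda_i)$ by definition, all four hypotheses needed to invoke Theorem \ref{Error_Estimate_One_Correction_Theorem} hold at level $1$.

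For the inductive step, assuming the four gap estimates at level $k$, the fact that Algorithm \ref{Multi_Correction} produces level $k+1$ by exactly one call of \emph{Correction} lets me apply Theorem \ref{Error_Estimate_One_Correction_Theorem} verbatim, obtaining the level $k+1$ estimates with $\varepsilon_{h_{k+1}}(\lambda_i)=\eta_a^*(H)\varepsilon_{h_k}(\lambda_i)+\delta_{h_{k+1}}(\lambda_i)$ and its adjoint. Unrolling this first-order linear recursion from $\varepsilon_{h_1}(\lambda_i)=\delta_{h_1}(\lambda_i)$ yields the claimed closed form
\begin{eqnarray*}
\varepsilon_{h_n}(\lambda_i)=\sum_{k=1}^{n}\eta_a^*(H)^{n-k}\delta_{h_k}(\lambda_i),
\end{eqnarray*}
and identically $\varepsilon^*_{h_n}(\lambda_i)=\sum_{k=1}^{n}\eta_a(H)^{n-k}\delta^*_{h_k}(\lambda_i)$, which establishes (\ref{Multi_Correction_Err_fun})--(\ref{Multi_Correction_Err_fun_Weak_Adjoint}).

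For the eigenvalue estimate I would avoid iterating the eigenvalue bound level by level, which would introduce a spurious accumulated factor, and instead apply the last inequality of Theorem \ref{Error_Estimate_Theorem} a single time to the final augmented space $V_{H,h_n}$ in which the last eigenvalue problem is solved. That inequality bounds $|\lambda_i-\widehat\lambda_{i,h_n}|$ by the product of the approximation property of $M(\lambda_i)$ and that of $M^*(\lambda_i)$ by $V_{H,h_n}$; but these two quantities are precisely what estimates (\ref{Definition_Varepsilon_k+1}) and (\ref{Definition_Varepsilon_k+1_Adjoint}) in the proof of Theorem \ref{Error_Estimate_One_Correction_Theorem} bound by $\varepsilon_{h_n}(\lambda_i)$ and $\varepsilon^*_{h_n}(\lambda_i)$, so taking their product gives (\ref{Multi_Correction_Err_eigen}).

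The unrolling of the recursion is routine. The steps requiring the most care are the two bookkeeping points just used: first, that the weak-norm ($\Phi$) hypothesis carries its leading factor $\eta_a^*(H)$ and that this factor is faithfully reproduced in the output of Theorem \ref{Error_Estimate_One_Correction_Theorem}, so the induction on the weak-norm estimates does not degrade and so the factor in (\ref{Multi_Correction_Err_fun_Weak}) and (\ref{Multi_Correction_Err_fun_Weak_Adjoint}) is exactly $\eta_a^*(H)$ and $\eta_a(H)$; and second, the justification that the eigenvalue estimate may legitimately be applied once at the finest level, using $\varepsilon_{h_n}(\lambda_i)$ and $\varepsilon^*_{h_n}(\lambda_i)$ in place of the raw $\delta_{h_n}(\lambda_i)$, $\delta^*_{h_n}(\lambda_i)$, rather than being compounded across levels.
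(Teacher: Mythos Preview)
Your proposal is correct and follows essentially the same approach as the paper: establish the base case at level $h_1$ via Theorem \ref{Error_Estimate_Theorem} and Lemma \ref{Negative_norm_estimate_Lemma} together with $\eta_a^*(h_1)\le\eta_a^*(H)$, propagate by repeated application of Theorem \ref{Error_Estimate_One_Correction_Theorem}, unroll the recursion, and invoke Theorem \ref{Error_Estimate_Theorem} once at the finest level for the eigenvalue bound. Your justification of the eigenvalue step via the approximation-property bounds (\ref{Definition_Varepsilon_k+1})--(\ref{Definition_Varepsilon_k+1_Adjoint}) on $V_{H,h_n}$ and $V^*_{H,h_n}$ is in fact slightly more explicit than the paper's one-line citation of Theorem \ref{Error_Estimate_Theorem} together with (\ref{Multi_Correction_Err_fun}) and (\ref{Multi_Correction_Err_fun_Adjoint}).
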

\begin{proof}
First, the following estimates hold
\begin{eqnarray}
\Theta(M(\lambda_i),M_{h_1}(\lambda_i)) &\lesssim& \varepsilon_{h_1}(\lambda_i),\label{Multi_Correction_Err_fun_h_1}\\
\Phi(M(\lambda_i),M_{h_1}(\lambda_i)) &\lesssim&\eta_a^*(h_1) \varepsilon_{h_1}(\lambda_i)\leq \eta_a^*(H) \varepsilon_{h_1}(\lambda_i),\label{Multi_Correction_Err_fun_Weak_h_1}\\
\Theta(M^*(\lambda_i),M^*_{h_1}(\lambda_i)) &\lesssim& \varepsilon^*_{h_1}(\lambda_i),\label{Multi_Correction_Err_fun_Adjoint_h_1}\\
\Phi(M^*(\lambda_i),M^*_{h_1}(\lambda_i)) &\lesssim&\eta_a(h_1) \varepsilon^*_{h_1}(\lambda_i)\leq \eta_a(H) \varepsilon^*_{h_1}(\lambda_i).\label{Multi_Correction_Err_fun_Weak_Adjointh_1}
\end{eqnarray}
Then we set $\varepsilon_{h_1}(\lambda_i):= \delta_{h_1}(\lambda_i)$ and $\varepsilon_{h_1}^*(\lambda_i):=\delta_{h_1}^*(\lambda_i)$.

By recursive relation and Theorem \ref{Error_Estimate_One_Correction_Theorem},
{ we derive}
\begin{eqnarray}\label{epsilon_n_1}
\Theta(M(\lambda_i),M_{h_n}(\lambda_i))&\lesssim&\varepsilon_{h_n}(\lambda_i) = \eta_a^*(H)\varepsilon_{h_{n-1}}(\lambda_i)
+\delta_{h_n}(\lambda_i)\nonumber\\
&\lesssim&\eta_a^*(H)^2\varepsilon_{h_{n-2}}(\lambda_i)+
\eta_a^*(H)\delta_{h_{n-1}}(\lambda_i)+\delta_{h_n}(\lambda_i)\nonumber\\
&\lesssim&\sum\limits_{k=1}^n\eta_a^*(H)^{n-k}\delta_{h_k}(\lambda_i)
\end{eqnarray}
and
\begin{eqnarray}\label{Error_n-1_Negative_norm}
\Phi(M(\lambda_i),M_{h_n}(\lambda_i))&\lesssim& \eta_a^*(H)\sum\limits_{k=1}^n\eta_a^*(H)^{n-k}\delta_{h_k}(\lambda_i).
\end{eqnarray}
These are the estimates (\ref{Multi_Correction_Err_fun}) and (\ref{Multi_Correction_Err_fun_Weak}) and the
estimates (\ref{Multi_Correction_Err_fun_Adjoint}) and (\ref{Multi_Correction_Err_fun_Weak_Adjoint}) can be proved similarly.
From Theorem \ref{Error_Estimate_Theorem}, (\ref{Multi_Correction_Err_fun}) and (\ref{Multi_Correction_Err_fun_Adjoint}),
we can obtain the estimate (\ref{Multi_Correction_Err_eigen}).
\end{proof}

\section{Numerical results}
In this section, we give some numerical results to illustrate the
efficiency of the multilevel correction scheme defined by Algorithm \ref{Multi_Correction}.
Here, we solve the following eigenvalue problem
\begin{equation}\label{Numerical_Exam_1}
\left\{
\begin{array}{rcl}
-\Delta u+\mathbf b\cdot \nabla u &=& \lambda u,\ \ \ {\rm in}\ \Omega,\\
u&=&0,\ \ \ \ \ {\rm on}\ \partial\Omega,
\end{array}
\right.
\end{equation}
where $\mathbf b=[b_1,b_2]^T\in\mathcal{C}^2$ is a constant vector and $\Omega=(0,1)\times(0,1)$.
This example comes from \cite{HeuvelineRannacher_2001,HeuvelineRannacher_2006}.
We choose $b_1=1$ and $b_2=1/2$ in Subsections \ref{Multi_Space_Subsection} and
\ref{Multi_Grid_Subsection}. Then we choose
$b_1=\cos(\pi x_1)\sin(\pi x_2)$ and $b_2=-\sin(\pi x_1)\cos(\pi x_2)$ in Subsection \ref{Multi_Level_L_Shape}.
We also choose a complex vector $\mathbf b$ in the final example.

When $b_1=1$ and $b_2=1/2$, the problem (\ref{Numerical_Exam_1})
 is nonself-adjoint, but all of its eigenvalues are nondefective (all
algebraic eigenfunctions are of order $1$) and real numbers
\begin{eqnarray}
\lambda_{k,\ell}=\frac{b_1^2+b_2^2}{4}+(k^2+\ell^2)\pi^2,
\end{eqnarray}
for $k,\ell\in \mathcal{N}^+$.

The corresponding eigenfunctions can be chosen as real functions
\begin{eqnarray}
u_{k,\ell} &=& \exp\Big(\frac{b_1x_1+b_2x_2}{2}\Big)\sin(k\pi x_1)\sin(\ell\pi x_2).
\end{eqnarray}
The corresponding adjoint eigenvalue problem has eigenvalues $\lambda_{k,\ell}$ and eigenfunctions
\begin{eqnarray}
u_{k,\ell}^* &=& \exp\Big(-\frac{b_1x_1+b_2x_2}{2}\Big)\sin(k\pi x_1)\sin(\ell\pi x_2).
\end{eqnarray}

\subsection{Multi-space way}\label{Multi_Space_Subsection}
In this case, finer finite element spaces are constructed by
increasing polynomial degrees of the beginning finite element space on the same mesh. We first solve
the eigenvalue problem (\ref{Eigenvalue_Problem_Weak_Discrete}) { by linear finite element
on a relatively coarser mesh $\mathcal{T}_H$, then perform the first correction step
with quadratic element, followed by cubic} element for the second correction step
and quartic element for the third correction step. Our initial mesh $\mathcal{T}_H$
is obtained from the Delaunay triangulation followed by four levels of regular mesh refinement.
Figure \ref{Error_First_Eigenvalue_Multi_Space} depicts errors for the
first eigenvalue ($5/16+2\pi^2$) approximation, and
Figure \ref{Error_First_Eigenfunction_Multi_Space} plots numerical errors for
 the eigenfunction and the corresponding adjoint eigenfunction associated with the first eigenvalue.

\begin{figure}[ht]
\centering
\includegraphics[width=7cm,height=5cm]{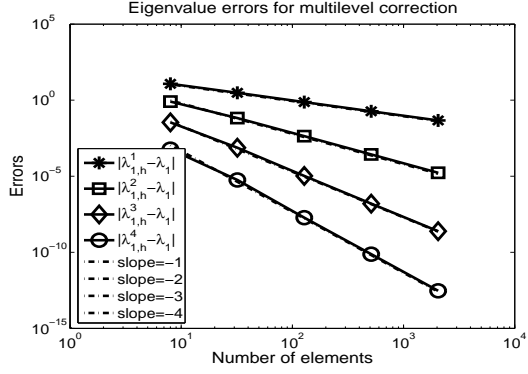}
\caption{\it {  Here,
$\lambda_h^1$ denote the eigenvalue approximation by linear element,
$\lambda_h^2$ is the eigenvalue approximation by the first correction with quadratic element,
$\lambda_h^3$ the eigenvalue approximation by the second correction with cubic element,
$\lambda_h^4$ the eigenvalue approximation by the third correction with quartic element} }
\label{Error_First_Eigenvalue_Multi_Space}
\end{figure}
\begin{figure}[ht]
\centering
\includegraphics[width=6cm,height=5cm]{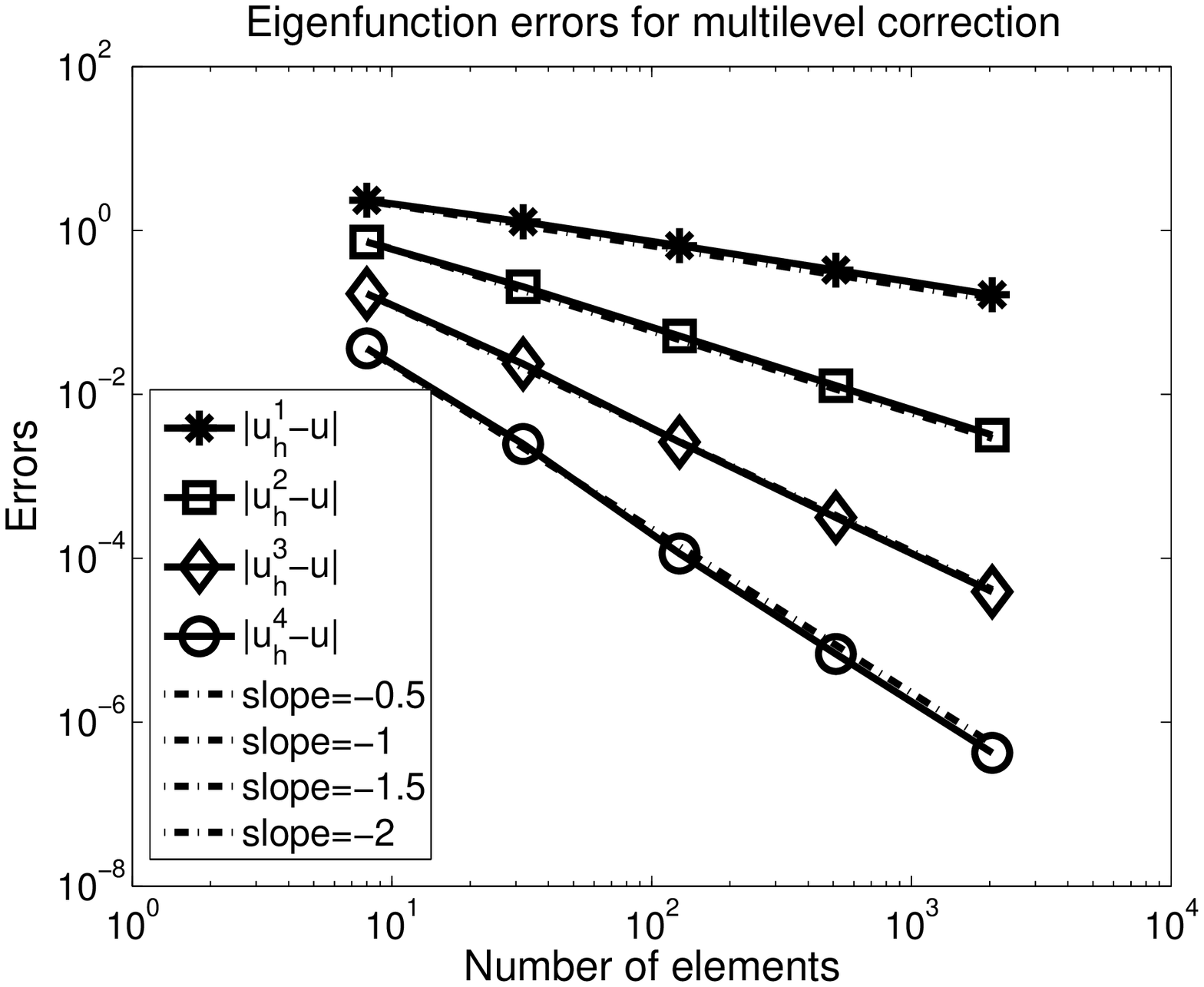}
\includegraphics[width=6cm,height=5cm]{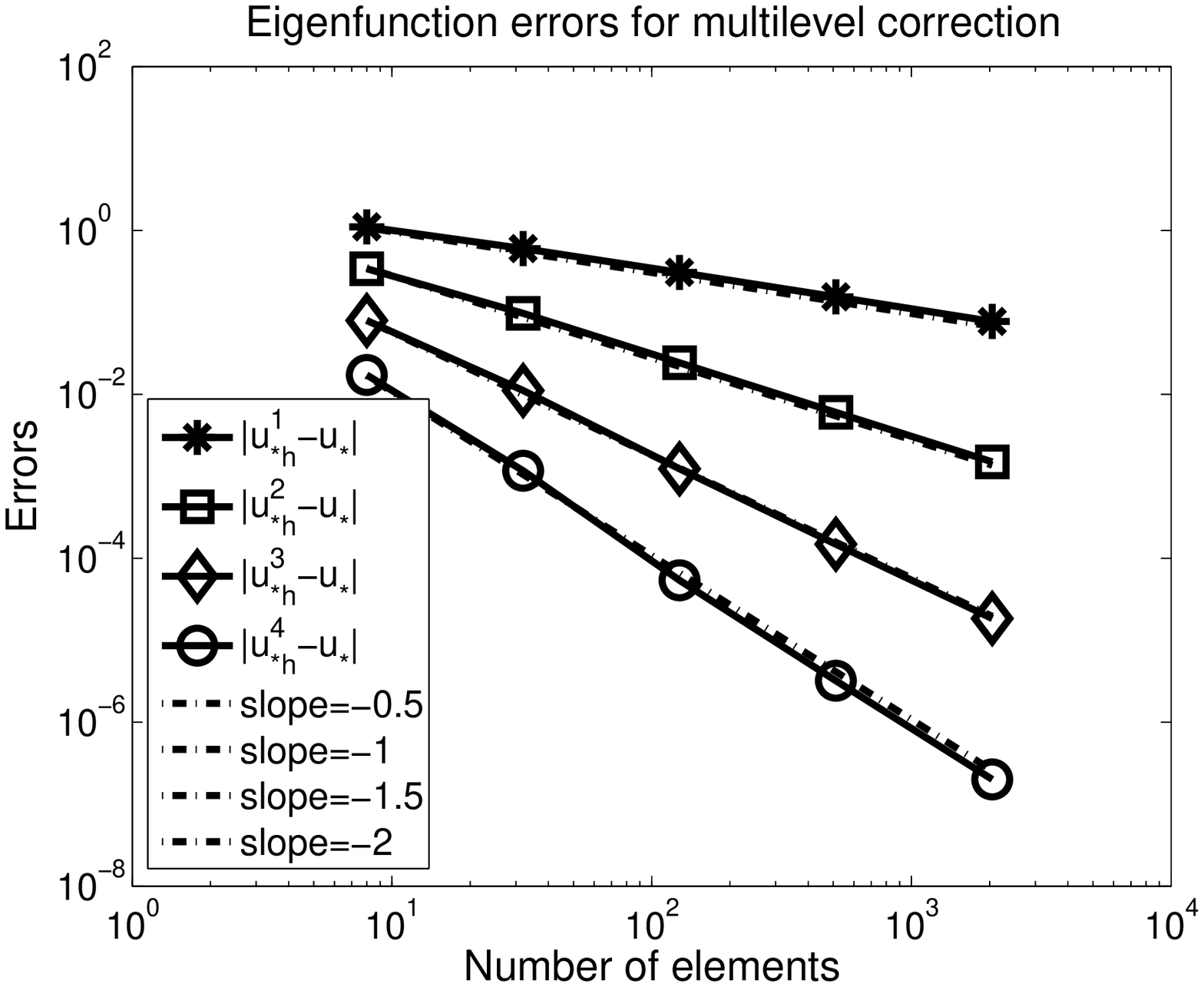}
\caption{\it {Here, $u_h^1$ and $u_{*h}^1$ denote the eigenfunction approximation and its
adjoint approximation by linear element,
$u_h^2$ and $u_{*h}^2$ are eigenfunction approximation and its adjoint approximation
by the first correction with quadratic element, $u_h^3$ and $u_{*h}^3$,
eigenfunction and its adjoint approximation by the second correction with cubic element,
$u_h^4$ and $u_{*h}^4$, eigenfunction and its adjoint approximation by the third
correction with quartic element}\label{Error_First_Eigenfunction_Multi_Space}}
\end{figure}
Furthermore, Figure \ref{Error_First_6_Eigenvalues_Multi_Space} provides numerical results
for the summation of the errors for the first $6$ eigenvalues: $5/16+[2\pi^2,5\pi^2,5\pi^2,8\pi^2,10\pi^2,10\pi^2]$.
\begin{figure}[ht]
\centering
\includegraphics[width=7cm,height=5cm]{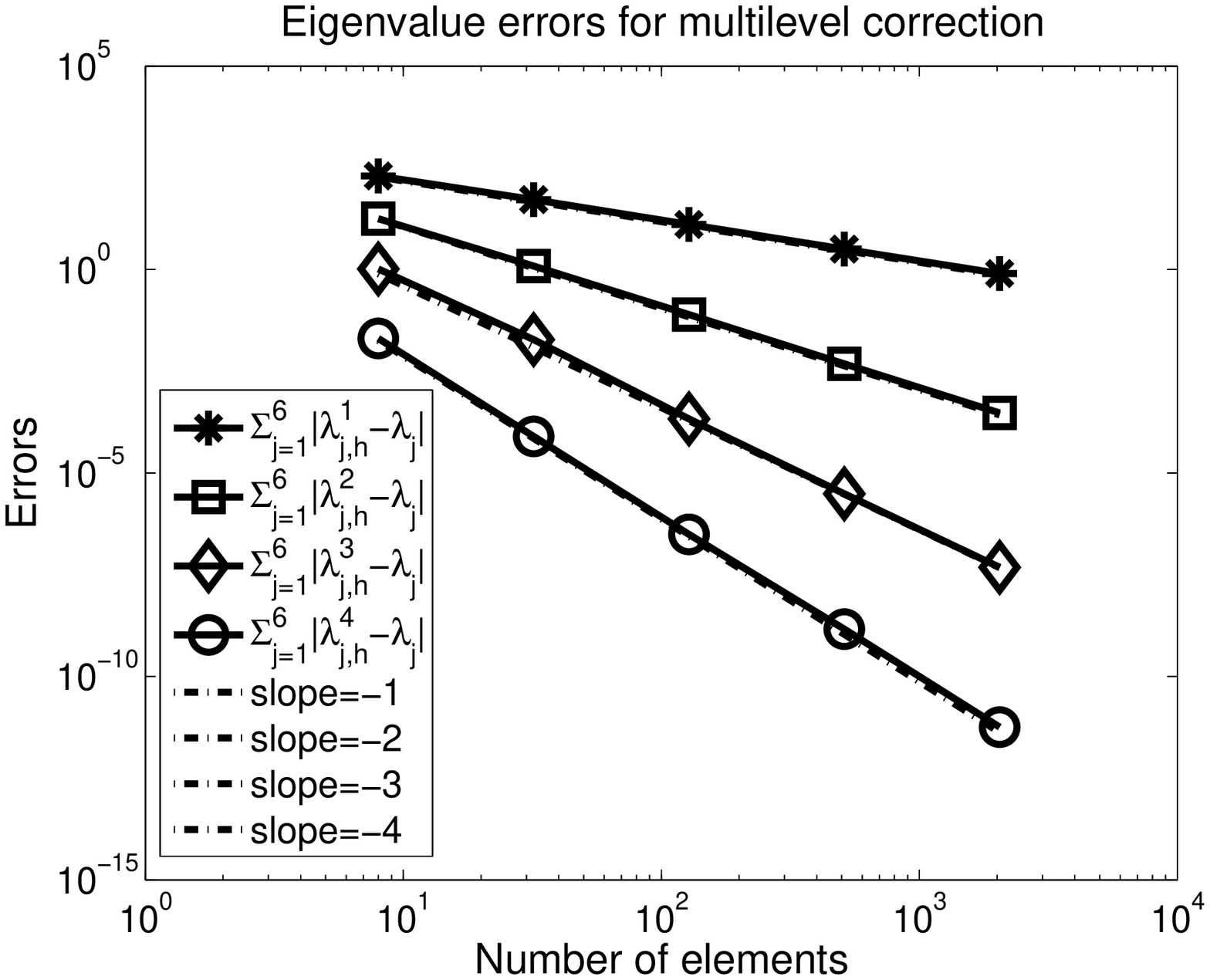}
\caption{\it {Approximation errors for the summation of the errors for the first $6$ eigenvalues by the multi-space way.
Here, $\lambda_{j,h}^1$ denote the eigenvalue approximation by linear element,
$\lambda_{j,h}^2$ is the eigenvalue approximation by the first correction with quadratic element,
$\lambda_{j,h}^3$ the eigenvalue approximation by the second correction with cubic element,
$\lambda_{j,h}^4$ the eigenvalue approximation by the third correction with quartic element}}
 \label{Error_First_6_Eigenvalues_Multi_Space}
\end{figure}

From Figures \ref{Error_First_Eigenvalue_Multi_Space}-\ref{Error_First_6_Eigenvalues_Multi_Space},
 we find that each correction step improves the convergence order by two for
 eigenvalue approximation, and by one for eigenfunction approximation
when the exact eigenfunction is sufficiently smooth.

To end this subsection, we make a comparison with the PPR method \cite{NagaZhang}.
We see from Figure \ref{Comparison_PPR}
that the two-level correction scheme by the multi-space way
 has slightly better accuracy than the PPR method. However,
the two-level correction needs to solve two extra boundary value problems while
the PPR method only need to perform a local recovery at each node. Thus, we
should say that the PPR method has better efficiency than the two-level
correction under regular mesh refinement
when the eigenfunction has regularity $H^3(\Omega)\cap W^{2,\infty}(\Omega)$. Nevertheless,
three and four-level correction will outperform the PPR method.
\begin{figure}[ht]
\centering
\includegraphics[width=7cm,height=5cm]{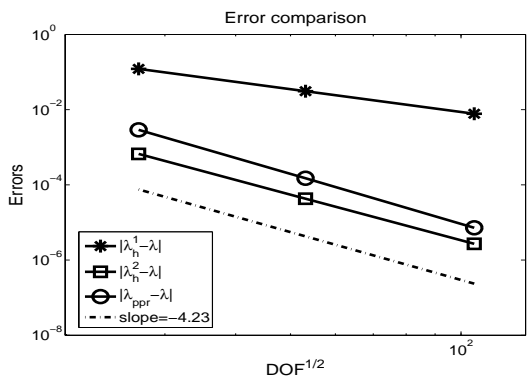}
\caption{\it { Comparison with the PPR method in \cite{NagaZhang} when
$b_1=10$ and $b_2=1$.
$\lambda_h^1$, eigenvalue approximation by linear element;
$\lambda_h^2$, eigenvalue approximation by the first correction with quadratic element;
$\lambda_{\rm ppr}$, the eigenvalue approximation by the PPR method}} \label{Comparison_PPR}
\end{figure}

\subsection{Multi-grid way}\label{Multi_Grid_Subsection}
An alternative way of the multilevel correction
scheme is to construct finer finite element spaces by mesh refinement.
We first solve the eigenvalue problem
(\ref{Eigenvalue_Problem_Weak_Discrete}) in the linear finite element space on
 an initial coarse mesh
$\mathcal{T}_H$ ($\mathcal{T}_{h_1}:=\mathcal{T}_H$).
Then we refine the mesh regularly with the resultant meshes $\mathcal{T}_{h_k}$
satisfying $h_k=2^{1-k}H$ for $(k=2,\cdots,n)$,
 and solve auxiliary source problems (\ref{aux_problem}) and (\ref{aux_problem_Adjoint})
 in the linear finite element space $V_{h_k}$ defined on $\mathcal{T}_{h_k}$ and the
 corresponding eigenvalue problems (\ref{Eigen_Augment_Problem}) and
 (\ref{Eigen_Augment_Problem_Adjoint}) in $V_{H,h_k}$.
We have the following estimate
\begin{eqnarray*}
\varepsilon_{h_n}(\lambda)&=&\sum\limits_{k=1}^{n}H^{n-k}h_k
=\sum\limits_{k=1}^{n}(2H)^{n-k}h_n\leq\frac{1}{1-2H}h_n\approx h_n,
\end{eqnarray*}
and similarly $\varepsilon^*_{h_n}(\lambda)\approx h_n$,
which implies that the multilevel correction method achieves the optimal
convergence rate if the initial mesh size $H$ is reasonably small,
say $H=1/4$ as we will use in our numerical tests.

Numerical results for the first eigenvalue $\lambda=5/16+2\pi^2$
and the two associated eigenfunctions are demonstrated in Figures
\ref{Error_First_Eigenvalue_Multi_Grid} and \ref{Error_First_Eigenfunction_Multi_Grid}, respectively.
Here we use the uniform meshes with $H=1/4$.
\begin{figure}[ht]
\centering
\includegraphics[width=7cm,height=5cm]{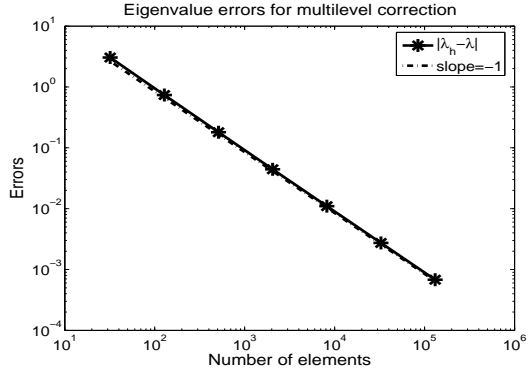}
\caption{\it { Approximation errors} for the first eigenvalue $5/16+2\pi^2$
by the multi-grid way with $H=1/4$}
\label{Error_First_Eigenvalue_Multi_Grid}
\end{figure}
\begin{figure}[ht]
\centering
\includegraphics[width=6cm,height=5cm]{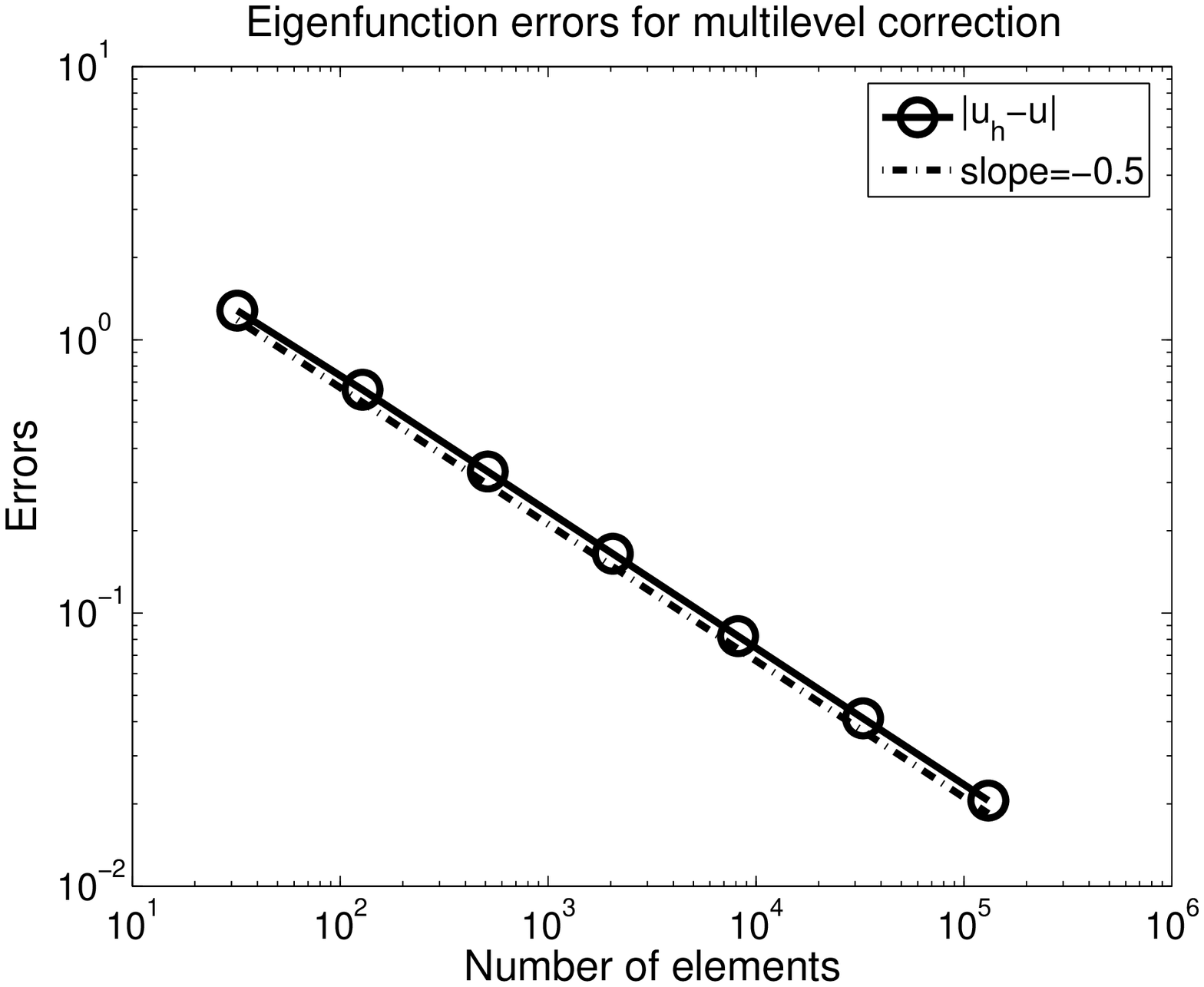}
\includegraphics[width=6cm,height=5cm]{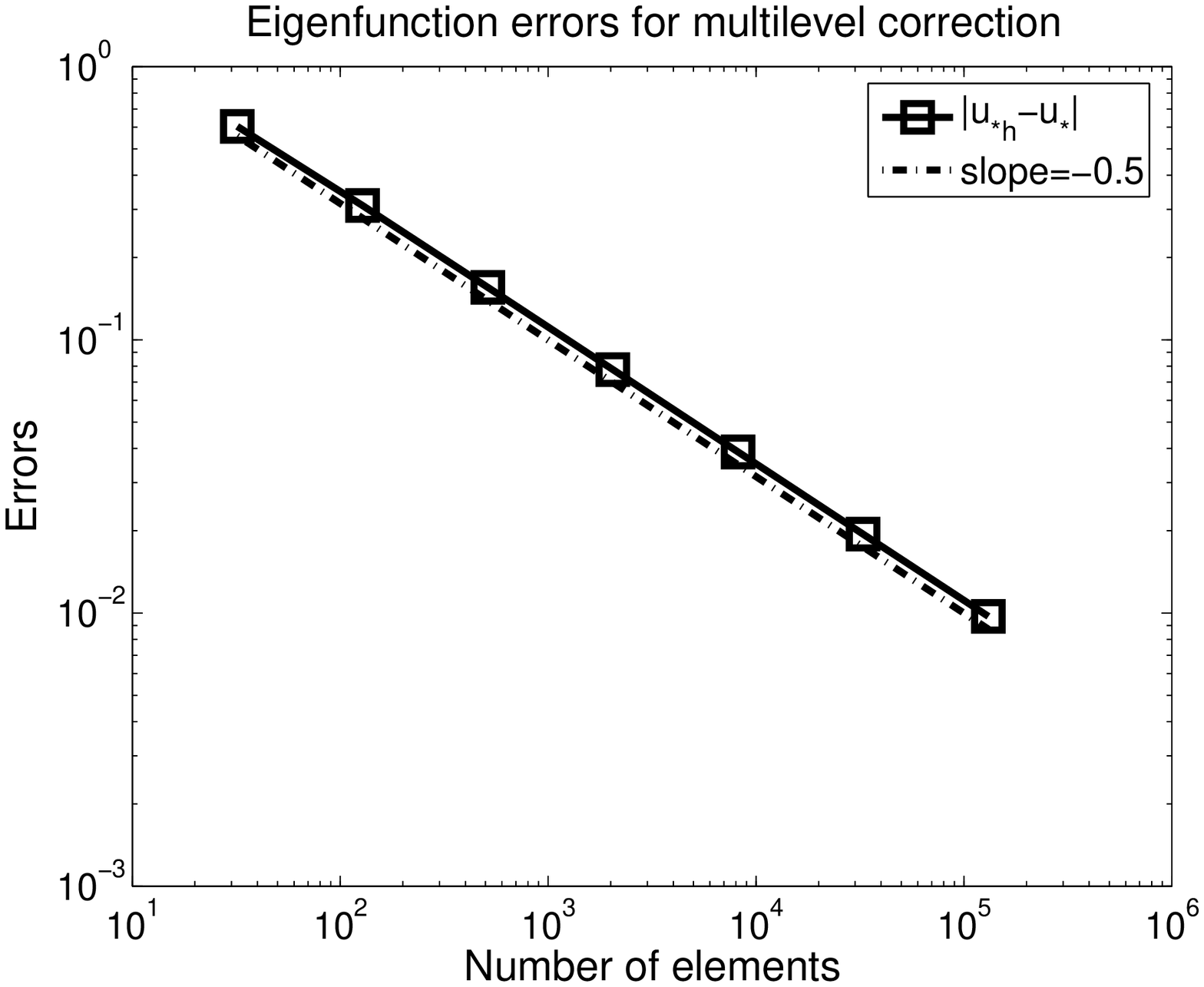}
\caption{\it { Approximation errors for the first eigenfunction and its adjoint
by the multi-grid way with $H=1/4$}}
\label{Error_First_Eigenfunction_Multi_Grid}
\end{figure}
Furthermore, Figure \ref{Error_First_6_Eigenvalues_Multi_Grid}
 provides numerical results for the summation of the errors for the first
$6$ eigenvalues: $5/16+[2\pi^2,5\pi^2,5\pi^2,8\pi^2,10\pi^2,10\pi^2]$
with $H=1/8$ and $H=1/16$, respectively.
\begin{figure}[ht]
\centering
\includegraphics[width=6cm,height=5cm]{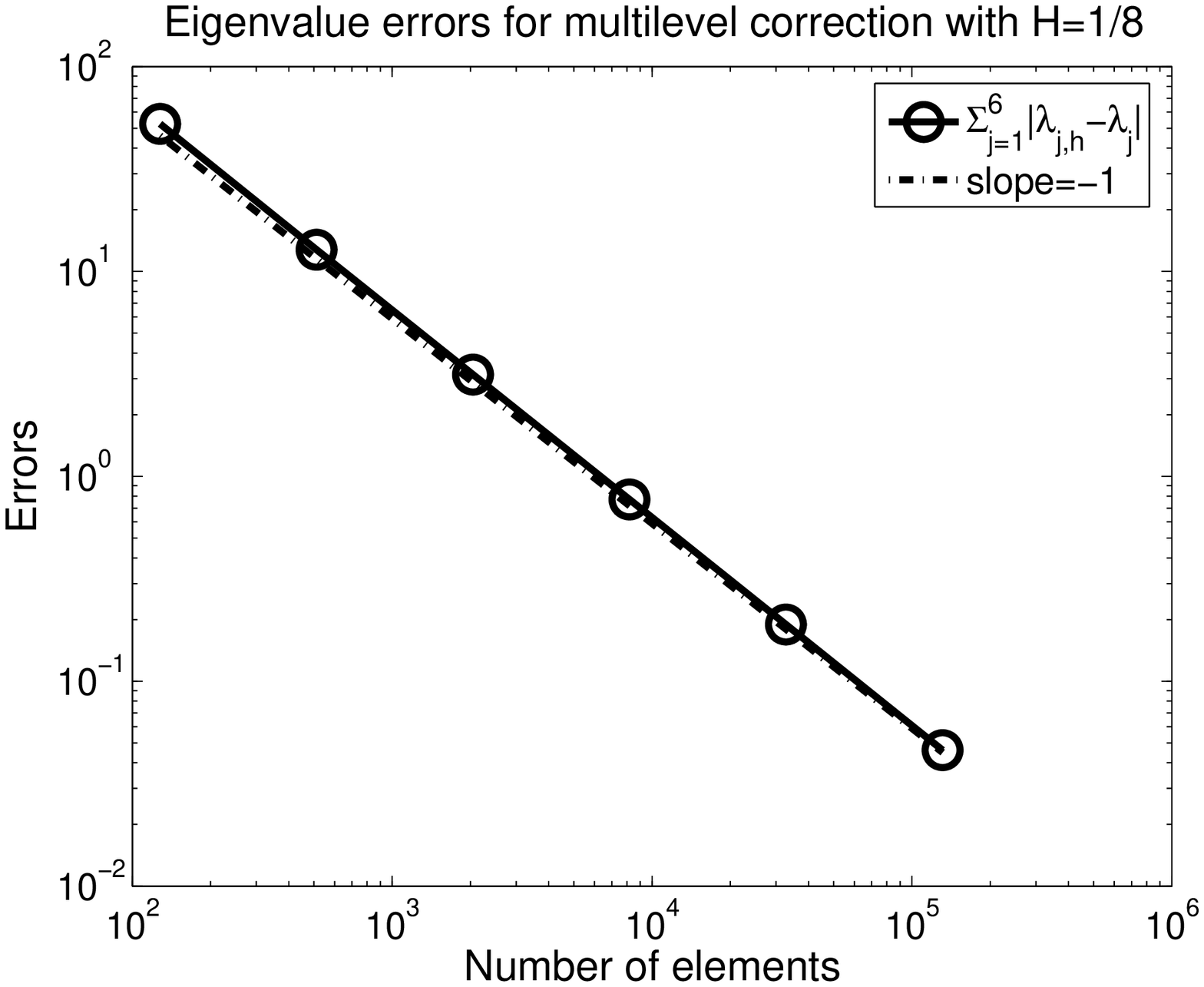}
\includegraphics[width=6cm,height=5cm]{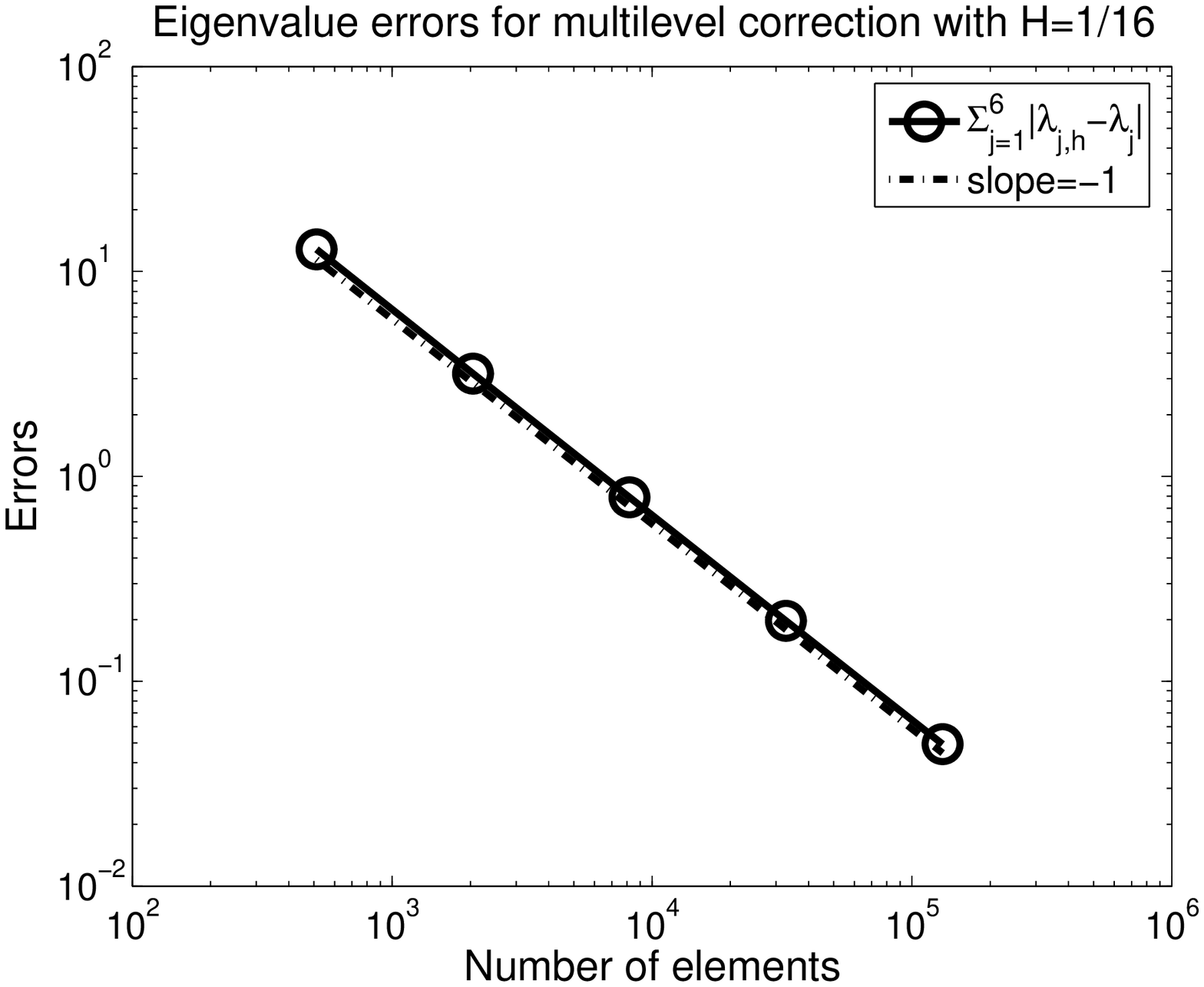}
\caption{\it {Approximation errors for the error summation of the first $6$ eigenvalues by
the multi-grid way with $H=1/8$ (left) and $1/16$ (right)}}
\label{Error_First_6_Eigenvalues_Multi_Grid}
\end{figure}

We observe from Figures \ref{Error_First_Eigenvalue_Multi_Grid}-\ref{Error_First_6_Eigenvalues_Multi_Grid},
that our multilevel correction method with
the multi-grid way produces eigenvalue and eigenfunction approximations with the
optimal convergence rate. Therefore, we can combine the multigrid method for boundary
value problems and our multilevel correction scheme (cf. \cite{LinXie,Xie_IMA})
to achieve better efficiency for nonsymmetric eigenvalue problems.

\subsection{Eigenvalue problem on $L$-shape domain}\label{Multi_Level_L_Shape}
In this subsection, we consider the eigenvalue problem (\ref{Numerical_Exam_1})
on the $L$-shape domain
 $\Omega=(-1,1)\times(-1,1)\backslash[0, 1)\times (-1, 0]$.
Since $\Omega$ has a reentrant corner, the singularity of eigenfunctions is expected.
As a consequence, the convergence rate for the first eigenvalue approximation is $4/3$
by the linear finite element method on quasi-uniform meshes.
Since the exact eigenvalue is unknown, we choose an adequately accurate approximation
$\lambda = 9.95240442893276$ as the exact first eigenvalue for our numerical tests.

Our multilevel correction scheme is tested on a sequence of
meshes $\mathcal{T}_H$ ($\mathcal{T}_{h_1}:=\mathcal{T}_H$), $\mathcal{T}_{h_2}, \cdots, \mathcal{T}_{h_n}$
 produced by the adaptive refinement
(cf. \cite{WuZhang,XuZhou_Eigen}). Here the ZZ recovery method (cf. \cite{ZienkiewiczZhu}) is adopted as
the {\it a posteriori} error estimator for eigenfunction and adjoint eigenfunction
 approximations $\sqrt{\|u_h-u\|_{a,h}^2+\|u_h^*-u^*\|_{a,h}^2}$.
Figure \ref{Mesh_AFEM_Exam_2} shows the initial mesh and the mesh after $12$
adaptive iterations. Figure \ref{Convergence_AFEM_Exam_2_First}
 gives the corresponding numerical results for the adaptive iterations.
\begin{figure}[ht]
\centering
\includegraphics[width=5.5cm,height=5.5cm]{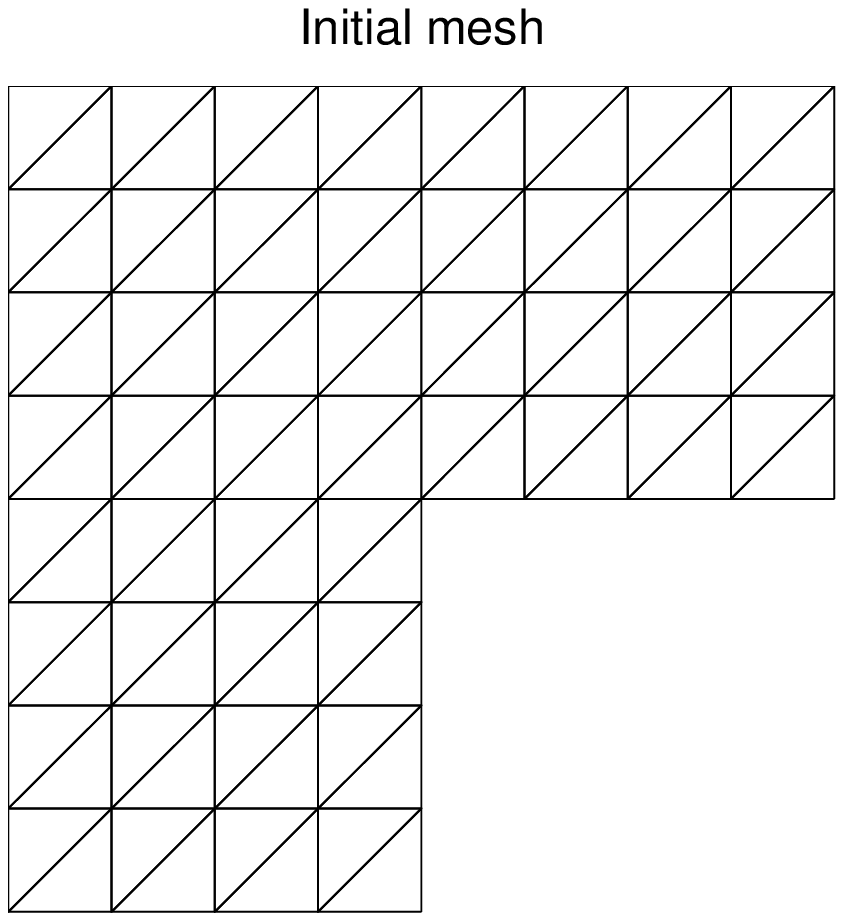}
\includegraphics[width=5.5cm,height=5.5cm]{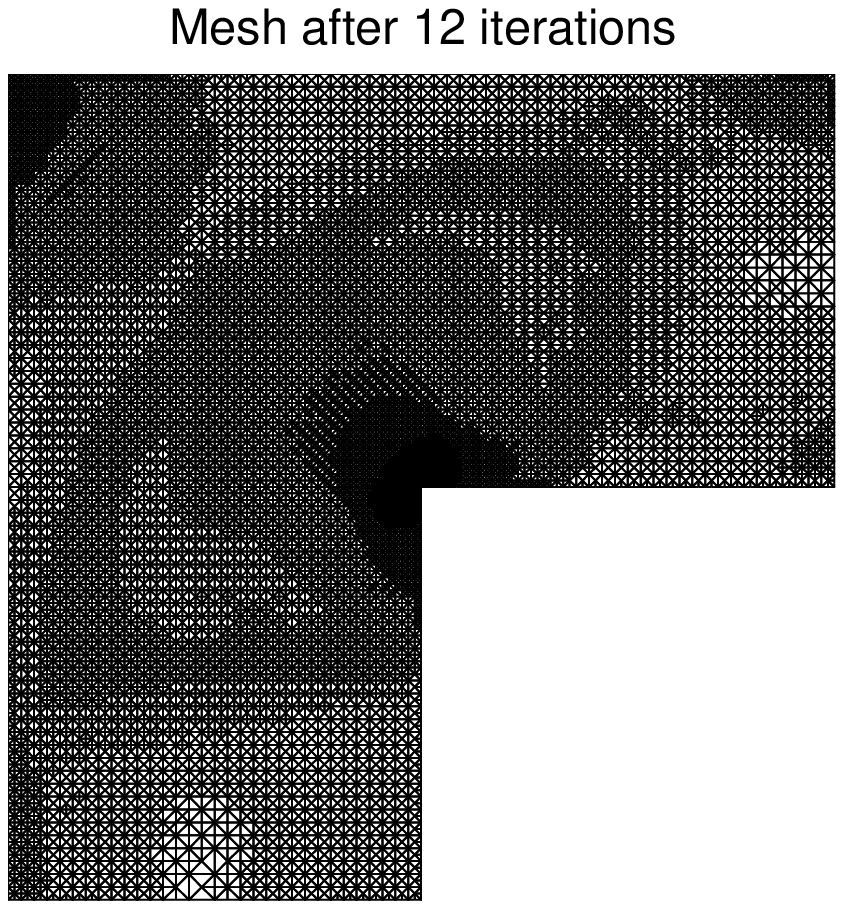}
\caption{\it The initial mesh and the one after 12 adaptive iterations for
 the L-shape domain} \label{Mesh_AFEM_Exam_2}
\end{figure}
\begin{figure}[ht]
\centering
\includegraphics[width=6cm,height=6cm]{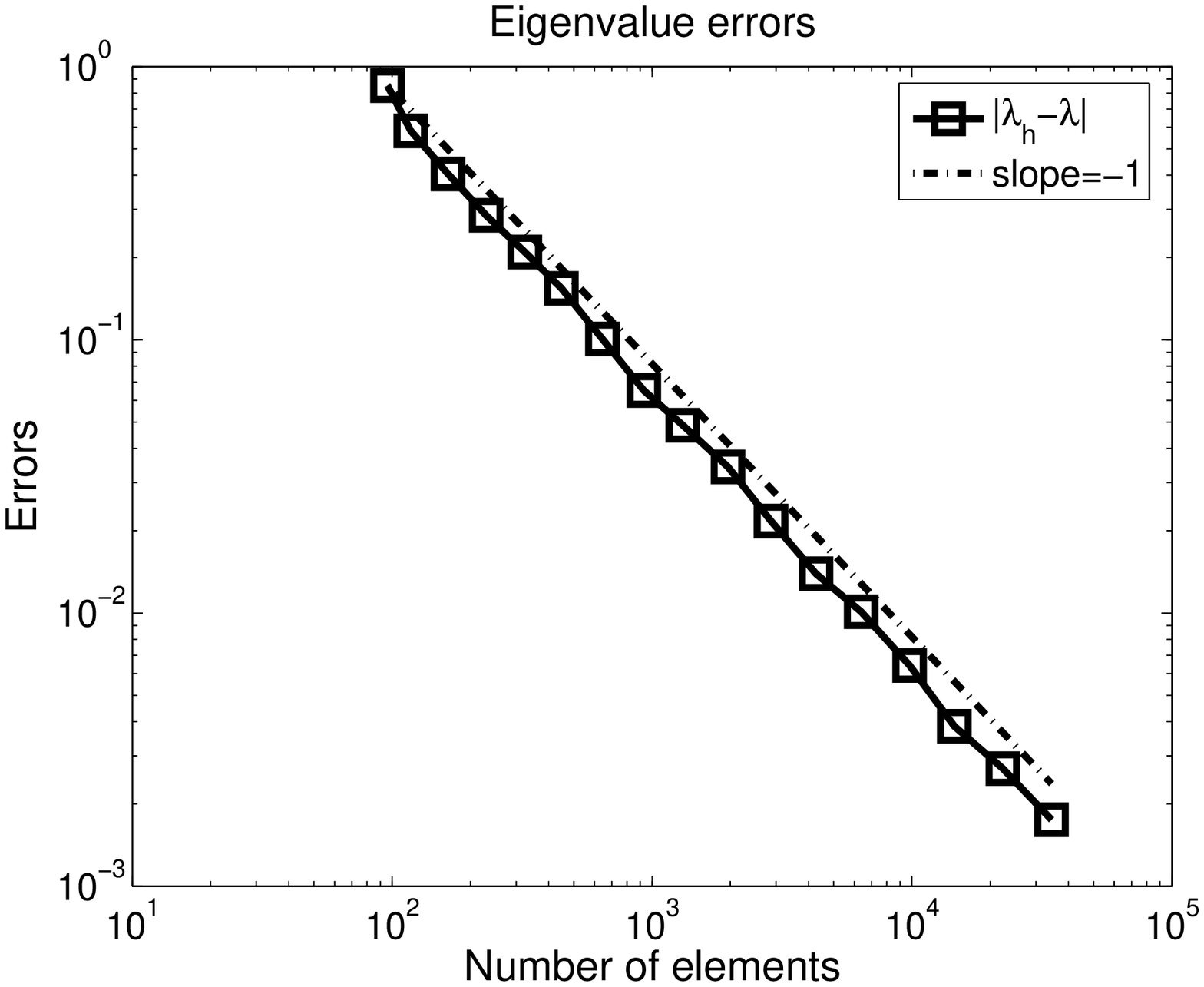}
\includegraphics[width=6cm,height=6cm]{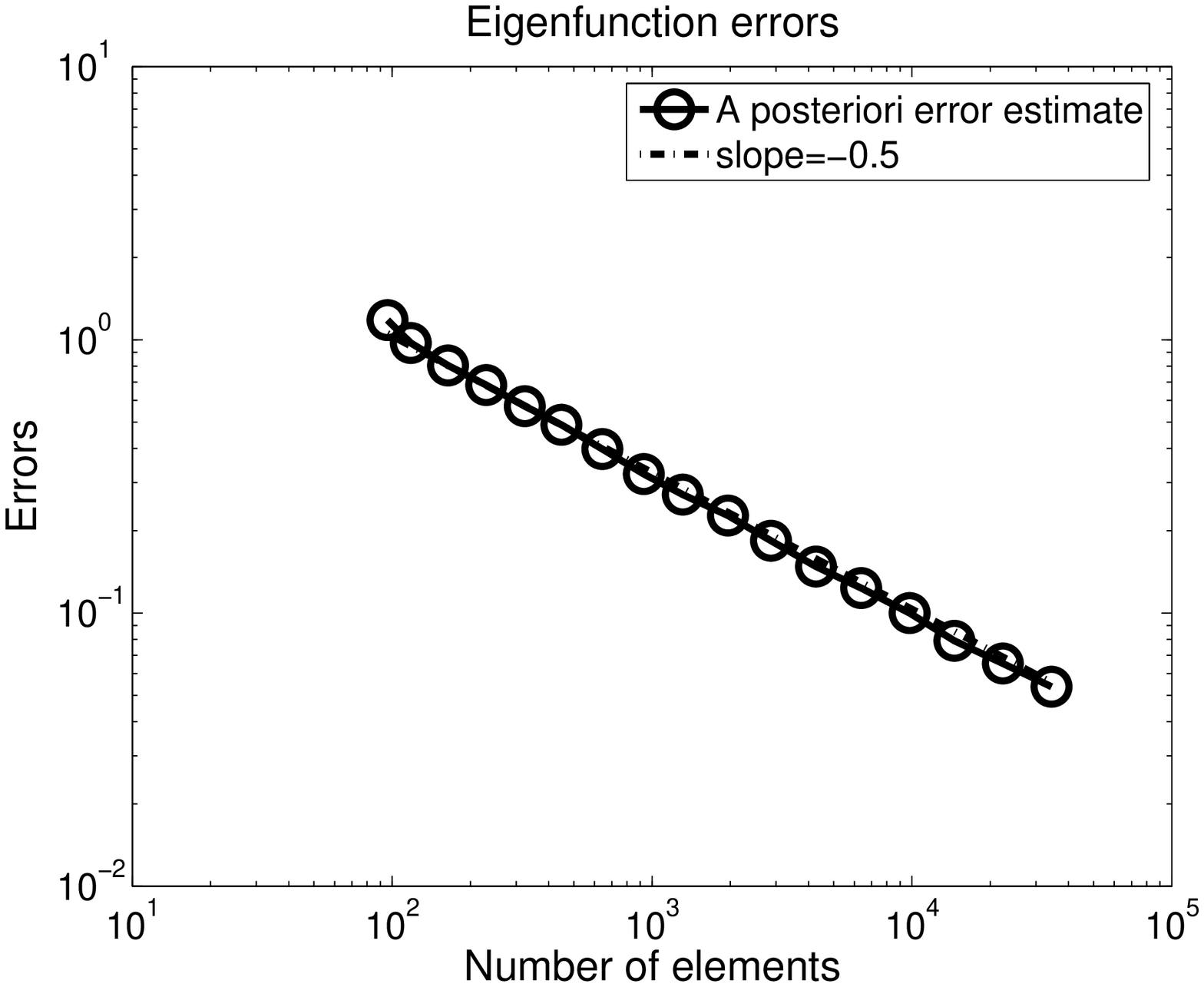}
\caption{\it { Approximation errors of the first eigenvalue and the a
posteriori errors of the associated eigenfunction and adjoint eigenfunction}} \label{Convergence_AFEM_Exam_2_First}
\end{figure}

From Figure \ref{Convergence_AFEM_Exam_2_First}, we observe that the multilevel correction method
works well on adaptive meshes with the optimal convergence rate. Furthermore, the
situation is very different from the two-gird \cite{Kolman,XuZhou,YangFan} method in that the initial
mesh has very little impact on the finest one.
Thus the multilevel correction method can be coupled with the adaptive refinement naturally.

\subsection{Eigenvalue problem with complex vector}
In this subsection, we test the multilevel correction scheme for the problem (\ref{Numerical_Exam_1})
with complex vector $\mathbf b=[1+2\textrm{i}, 1/2-\textrm{i}]^T$. We use the multi-space and multi-grid
ways as in Subsections \ref{Multi_Space_Subsection} and \ref{Multi_Grid_Subsection}, respectively,
 to check the multilevel correction scheme. Figure \ref{Error_First_6_Eigenvalues_Complex}
 shows the numerical results for the first $6$ eigenvalues. It is observed from
 Figure \ref{Error_First_6_Eigenvalues_Complex} that the multilevel correction method defined in
 Algorithm \ref{Multi_Correction} can also work very well for the nonsymmetric eigenvalue problems
 with complex vector.

 \begin{figure}[ht]
\centering
\includegraphics[width=6cm,height=5cm]{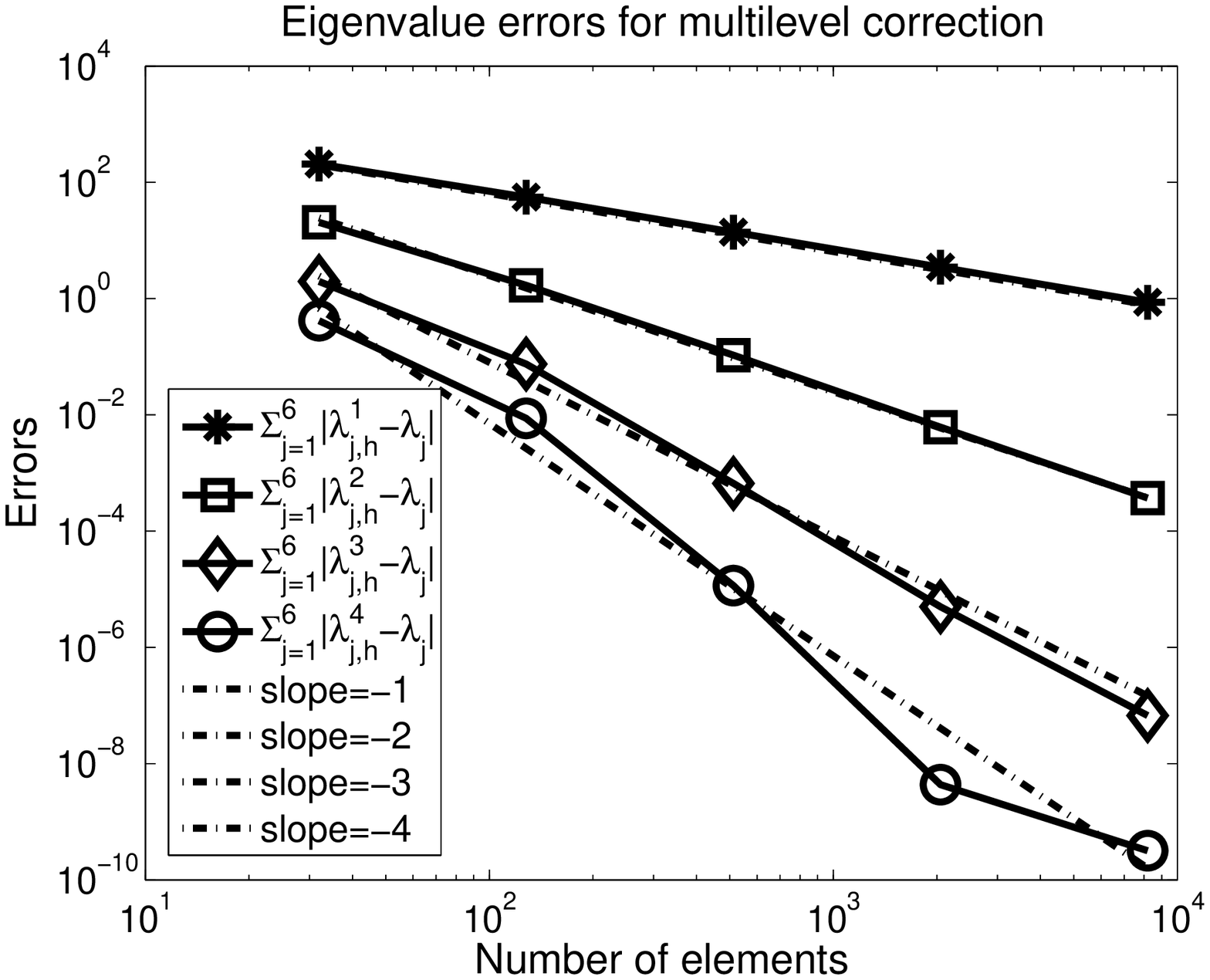}
\includegraphics[width=6cm,height=5cm]{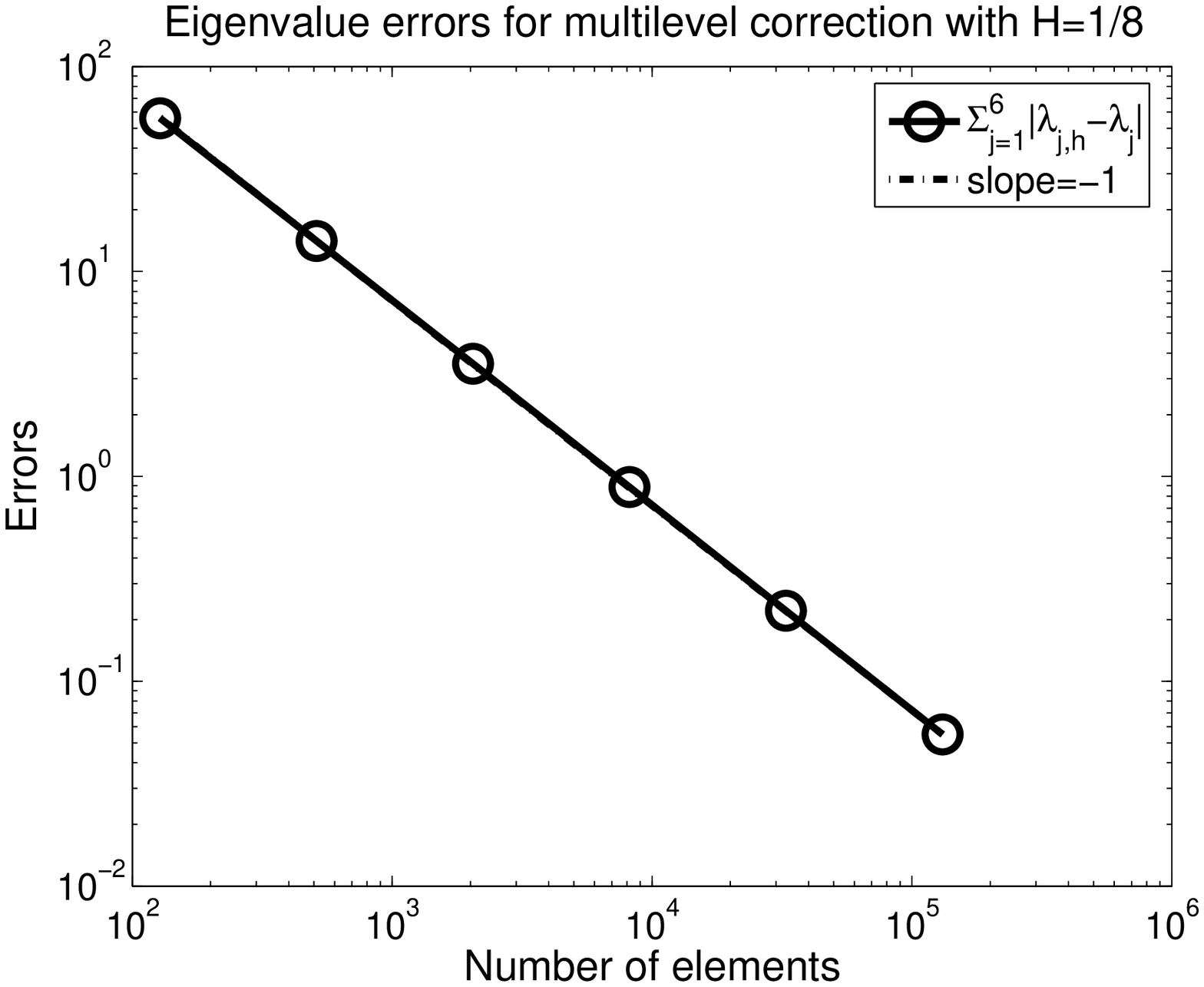}
\caption{\it { Approximation errors for the summation of errors for the first $6$ eigenvalues by
the multi-space way (left) and  the multi-grid way with $H=1/8$ (right). Here,
$\lambda_{j,h}^1$ denote the eigenvalue approximation by linear element,
$\lambda_{j,h}^2$ is the eigenvalue approximation by the first correction with quadratic element,
$\lambda_{j,h}^3$ the eigenvalue approximation by the second correction with cubic element,
$\lambda_{j,h}^4$ the eigenvalue approximation by the third correction with quartic element}}
\label{Error_First_6_Eigenvalues_Complex}
\end{figure}

\section{Concluding remarks}
In this paper, we propose and analyze a multilevel correction scheme
to improve the efficiency of both {\color{black}defective and nondefective} nonsymmetric eigenpair approximations.
In this multilevel correction, we only need to solve eigenvalue problems in the coarsest
 finite element space. Sometimes, we also need to compute the algebraic eigenspace based on the
geometric eigenspace when the ascent is larger than $1$.

Furthermore, our multilevel correction scheme can be coupled with
the multigrid method to construct a parallel
method for eigenvalue problems (see, e.g, \cite{LinXie,LinXie_Multigrid,Xie_IMA,XuZhou_Eigen}).
It can also be combined with  adaptive techniques (cf. \cite{WuZhang}) for singular
 eigenfunction cases. These will be our future work.

{\bf A final remark.} As long as higher eigenvalues are concerned, the multi-space way is preferred
(than the multi-grid way). We can see it clearly by comparing numerical accuracies for summations
of the first 6 eigenvalues in \S 5.1 and \S 5.2.


\section*{Acknowledgments}
The first author is supported in part by the National Natural Science Foundation of
China (NSFC 91330202, 11001259, 11371026, 11201501, 11031006, 2011CB309703)
and the National Center for Mathematics and Interdisciplinary Science,
CAS and the President Foundation of AMSS-CAS.
The second author is supported in part by the US National Science Foundation
 through grant DMS-1115530, DMS-1419040, and the National Natural Science Foundation
  of China (91430216, 11471031).


\bibliographystyle{amsplain}

\end{document}